\newtheorem{theorem}{Theorem}[section]
\newtheorem*{theorem*}{Theorem}
\newtheorem{lemma}[theorem]{Lemma}
\newtheorem{proposition}[theorem]{Proposition}
\newtheorem{corollary}[theorem]{Corollary}
\theoremstyle{definition}
\newtheorem{example}[theorem]{Example}
\newtheorem*{conjecture*}{Conjecture}
\def\1{\mathbf{1}}
\def\F{\mathbf{F}}
\DeclareMathOperator{\Mat}{M}
\DeclareMathOperator{\characteristic}{char}
\DeclareMathOperator{\slfrak}{\mathfrak{sl}}
\DeclareMathOperator{\pslfrak}{\mathfrak{psl}}
\DeclareMathOperator{\diag}{diag}
\DeclareMathOperator{\tr}{Tr}
\DeclareMathOperator{\GL}{GL}
\DeclareMathOperator{\transpose}{T}
\DeclareMathOperator{\Fr}{Fr}
\DeclareMathOperator{\Div}{Div}
\DeclareMathOperator{\mult}{mult}
\DeclareMathOperator{\ad}{ad}
\DeclareMathOperator{\Gal}{Gal}
\begin{document}
\baselineskip=13pt 

\title{Two-generation of traceless matrices over finite fields}

\author{Omer Cantor}
\address{Omer Cantor, Department of Mathematics, University of Haifa, 199 Abba Khoushy Avenue, Haifa, Israel}
\email{ocantor@proton.me}

\author{Urban Jezernik}
\address{Urban Jezernik, Faculty of Mathematics and Physics, University of Ljubljana and Institute of Mathematics, Physics, and Mechanics, Jadranska 19, 1000 Ljubljana, Slovenia}
\email{urban.jezernik@fmf.uni-lj.si}

\author{Andoni Zozaya}
\address{Andoni Zozaya, Faculty of Mathematics and Physics, University of Ljubljana, Jadranska 19, 1000 Ljubljana, Slovenia}
\email{andoni.zozaya@fmf.uni-lj.si}

\thanks{UJ is supported by the Slovenian Research Agency program P1-0222 and grants J1-50001, J1-4351, J1-3004, N1-0217. AZ is supported by the Slovenian Research Agency programs P1-0222, P1-0294 and grants J1-50001, J1-4351, N1-0216.}

\begin{abstract}
  We prove that the Lie algebra $\slfrak_n(\F_q)$ of traceless matrices over a finite field of characteristic $p$ can be generated by $2$ elements with exceptions when $(n,p)$ is $(3,3)$ or $(4,2)$. In the latter cases, we establish curious identities that obstruct $2$-generation.
\end{abstract}

\maketitle

\section{Introduction}
\label{sec:introduction}

In the study of algebraic structures, the concept of generation -- specifically, the ability of a small number of elements to generate an entire structure -- plays a crucial role in understanding their complexities and capabilities. A particularly notable generation property observed across \emph{simple} algebraic structures is their tendency to be generated by merely \emph{two} elements.

This $2$-generation phenomenon is well-established across various classical algebraic structures over \emph{classical fields} such as the complex numbers. For simple associative algebras, this property is underscored by the classical Burnside irreducibility theorem, which posits that over algebraically closed fields, any two matrices without a common proper invariant subspace generate the whole matrix algebra. Similarly, for simple algebraic groups, it is a result of Guralnick \cite[Theorem 3.3]{guralnick1998some} that over an algebraically closed field of any characteristic, a simple algebraic group $G$ is topologically $3/2$-generated, meaning that for every nontrivial $x$ there exists $y$ so that $\langle x, y \rangle$ is dense in $G$. Furthermore, simple Lie algebras over any field of characteristic $0$ have been shown to be $2$-generated by Kuranishi \cite{kuranishi1951everywhere} (see also \cite{bahturin1976} for a more generic approach), and explicit generating pairs were found by Grozman and Leites \cite{leites1996}. This was extended to $3/2$-generation in characteristic $0$ by Ionescu \cite{ionescu1976generators} and in infinite fields of positive characteristic $p > 3$ by Bois \cite{bois2009generators}. More recently, Chistopolskaya \cite{chistopolskaya2018nilpotent} has explored nilpotent pairs of generators in traceless matrices.

Within the context of \emph{finite fields}, there are analogous results for simple associative algebras, such as explicit generating pairs for matrices over integers provided by Petrenko and Sidki \cite[Example 2.6]{petrenko2007pairs}, which project to matrices over finite fields. Additionally, Neumann and Praeger \cite{neumann1995cyclic} have demonstrated that random pairs of matrices over a finite field generate the whole algebra with high probability as the field's size increases. Similar advancements have been made for finite simple groups, with Steinberg \cite{steinberg1962generators} proving that finite simple Chevalley groups are $2$-generated, and subsequently Aschbacher and Guralnick \cite{aschbacher1984some} establishing $2$-generation for all finite simple groups. Research into $2$-generation of Lie algebras over finite fields remains comparatively underexplored. This paper aims to address that gap.

\begin{theorem*}
  Let $\F_q$ be a finite field of characteristic $p$. Then the simple Lie algebra $\pslfrak_n(\F_q)$ is $2$-generated as long as $(n,p) \neq (3,3), (4,2)$.
\end{theorem*}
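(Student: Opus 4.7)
The plan is to reduce the theorem to showing that $\slfrak_n(\F_q)$ itself is $2$-generated: the centre of $\slfrak_n$ is either zero (when $p \nmid n$) or the line of scalar matrices (when $p \mid n$), so any Lie-generating pair in $\slfrak_n$ descends to a generating pair in $\pslfrak_n$. The task is therefore to exhibit, for each admissible $(n, q)$, an explicit pair $(x, y) \in \slfrak_n(\F_q)^2$ that generates $\slfrak_n(\F_q)$ as a Lie algebra.

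As the basic candidate, I would take $x$ to be a regular semisimple element and $y$ to have a cyclic ``shift'' shape. When $q \ge n$, I would choose $x = \diag(a_1, \dots, a_n)$ with pairwise distinct $a_i \in \F_q$ summing to zero, and $y = \sum_{i=1}^{n-1} E_{i,i+1} + \lambda E_{n,1}$ for a suitable nonzero $\lambda$. Since $\ad(x) E_{ij} = (a_i - a_j)\, E_{ij}$, the iterates $\ad(x)^k y$ for $k = 0, 1, \dots, n-1$ lie in $\mathrm{span}(E_{1,2}, \ldots, E_{n-1,n}, E_{n,1})$, and their coefficients form a Vandermonde system in the consecutive differences $d_i = a_i - a_{i+1}$ (with $d_n = a_n - a_1$). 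If the $d_i$ are arranged to be pairwise distinct and nonzero, inversion of the Vandermonde isolates each off-diagonal generator; subsequent brackets then recover every $E_{ij}$ with $i \neq j$, and the traceless Cartan follows from $[E_{ij}, E_{ji}] = E_{ii} - E_{jj}$. For $q < n$ the diagonal construction has too few available values, and I would instead take $x$ to be a trace-zero element whose minimal polynomial is irreducible of degree $n$ over $\F_q$; its eigenvalues form a single Galois orbit in $\F_{q^n}$, the Vandermonde argument carries over the algebraic closure, and the resulting subalgebra descends to $\F_q$ by Galois invariance.

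The principal obstacle is arranging the differences $d_i$ to be distinct in small characteristic: this fails already for $(n, q) = (3, 5)$, where no traceless diagonal element of $\slfrak_3(\F_5)$ admits three pairwise-distinct consecutive differences. The remedy in such residual cases is to break the cyclic symmetry of $y$ by introducing distinct scalar weights $\lambda_i$ on the $E_{i,i+1}$, which rescales the Vandermonde columns and can eliminate collisions, or to proceed inductively: once any single matrix unit $E_{ij}$ is extracted, the brackets $[E_{ij}, y]$ produce further units, and the Cartan elements from $[E_{ij}, E_{ji}]$ allow one to separate the remaining off-diagonal summands by further bracketing against $y$. A uniform finite check over the small exceptional pairs $(n, q)$, together with the elementary case $n = 2$ (already $2$-generated by $E_{12}$ and $E_{21}$ for any odd $q$), should close the proof for all $(n, p) \neq (3, 3)$. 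The genuine exception $(n, p) = (3, 3)$ is obstructed by the trace identity announced in the abstract, which forces every pair of traceless $3 \times 3$ matrices in characteristic $3$ to lie in a proper Lie subalgebra of $\slfrak_3$; since the theorem excludes this pair, no separate argument is needed there.
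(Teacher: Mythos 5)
Your overall architecture is genuinely different from the paper's and in principle more economical: instead of pairing a diagonal matrix with the all-ones matrix $\1 = E - I$ (which forces \emph{all} $n^2-n$ differences $\lambda_i-\lambda_j$ to be pairwise distinct --- the paper's notion of a consistent matrix), you pair it with a cyclic shift, so that only the $n$ cyclic consecutive differences $d_i$ need to be distinct and the Vandermonde system has size $n$ rather than $n^2-n$. This is a real saving and, for $q\ge n$, your construction can be made to work (note in passing that you do not need $x$ regular semisimple, only the $d_i$ distinct; your claimed failure at $(n,q)=(3,5)$ disappears once you allow a repeated eigenvalue, e.g.\ $x=\diag(2,2,1)$, $d=(0,1,4)$, though when $p\mid n$ you must additionally arrange $\sum_j j\,d_j=0$ to make $x$ traceless, a constraint you do not address).

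The genuine gap is the regime $q<n$, which is not a ``finite check'': for each fixed small $q$ there are infinitely many $n$, including all $n$ divisible by $p$. There your sketch hides exactly the hard part of the theorem. First, you need a trace-zero $\alpha\in\F_{q^n}$ of degree $n$; when $p\mid n$ this already requires an argument (it is essentially the existence statement the paper proves via the $\F_q[\Fr]$-module decomposition of $\F_{q^n}$). Second, and more seriously, for your Vandermonde to be invertible you need the cyclic consecutive differences of the eigenvalues, i.e.\ the Galois conjugates of $\beta=\alpha-\alpha^q$, to be pairwise distinct --- equivalently $\beta$ must itself have degree $n$ over $\F_q$. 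This is \emph{not} automatic for a trace-zero degree-$n$ element: for $n=q=p$ the Frobenius annihilator of $\alpha$ is $(x-1)^j$ for some $2\le j\le p-1$, and if $j=2$ then $\beta\in\F_p$ and all $n$ consecutive differences coincide, so the construction collapses. Third, ``descends by Galois invariance'' elides the question of whether a $y\in\slfrak_n(\F_q)$ exists whose conjugate into the eigenbasis of $x$ is supported exactly on the cyclic positions; this does hold for the shift matrix (it is a power of the permutation matrix commuting with the Frobenius permutation of the eigenbasis), but it must be checked, just as the paper must verify that $P^{-1}\1 P$ is rational. The paper's Sections 4.2--4.3 (normal elements when $p\nmid n$, and the existence and counting of sharply traceless elements when $p\mid n$) are precisely the missing existence arguments; without an analogue of them your proof does not cover unbounded rank.
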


We show that both $\pslfrak_3(\F)$  and $\pslfrak_4(\F)$ are, intriguingly, \emph{not} $2$-generated when $\F$ is any field of characteristic $3$ and $2$ respectively. However, these Lie algebras do allow for $3$-generation. These are, to the best of our knowledge, the first known examples of simple Lie algebras over infinite fields that are not generated by two elements, thus answering \cite[Question 2.3]{guralnick2006thompson} as well as \cite[Problem 2]{chistopolskaya2018nilpotent}.

The strategy of the proof is outlined as follows. Our argument rests on the concept of \emph{consistent matrices} (\Cref{sec:consistent}) in $\slfrak_n(\F_q)$. We explain what these matrices are, where they come from and how they can be used to prove $2$-generation of $\slfrak_n(\F_q)$, and consequently of $\mathfrak{psl}_n(\F_q)$, as long as they exist. When $n$ is fixed and the field $\F_q$ is large enough in terms of $n$ (this is the case of \emph{bounded rank}, \Cref{sec:bounded_rank}), we can produce these matrices from suitable subsets of integers related to distinct sum sets from additive combinatorics. On the other hand, when $n$ tends to infinity and the field is small (this is the case of \emph{unbounded rank}, \Cref{sec:unbounded_rank}), we produce generating pairs from companion matrices of suitable elements in extensions of $\F_q$. In the simple case, when $p$ does not divide $n$, normal elements do the job. In the modular case, when $p$ divides $n$, we replace normal elements by developing their traceless version which we call sharply traceless elements. \emph{Even characteristic} (\Cref{sec:even_characteristic}) requires special care and a further step to reach $2$-generation. The \emph{exceptional cases} (\Cref{sec:exceptions}) that our method does not cover are the Lie algebras $\pslfrak_3(\F_q)$ and $\pslfrak_4(\F_q)$ in characteristics $3$ and $2$ respectively. We demonstrate that these algebras are not $2$-generated by establishing a curious identity between any two elements in these Lie algebras that obstructs $2$-generation.

\section{Consistent matrices}
\label{sec:consistent}

An important ingredient of our proof of $2$-generation consists of \emph{consistent matrices}. These are diagonal matrices
\[
  D = \diag(\lambda_1, \lambda_2, \dots, \lambda_n) \in \slfrak_n(\F)
\]
over a field $\F$ satisfying the condition
\begin{equation}
  \label{eq:consistency_def}
  \lambda_i - \lambda_j = \lambda_k -\lambda_l \ \textup{ if and only if } \ (i,j) = (k,l) \text{ or } (i,k) = (j,l).
\end{equation}
The set of diagonal elements of a consistent matrix $D$ is called a \emph{consistent set}.
For example, over a field $\F$ of characteristic $0$, the following is a consistent set for any $n \geq 2$:
\begin{equation}
\label{eq:consistent_example}
\left\{ 1, 2, 2^2, 2^3, \dots, 2^{n-2}, 1 - 2^{n-1} \right\}.
\end{equation}
Note that when $\characteristic(\F) = 2$, one has $\lambda_i - \lambda_j = \lambda_j - \lambda_i$ for any $i,j$, so no consistent set exists. We will deal with even characteristic separately in \Cref{sec:even_characteristic}.

Consistent matrices were implicitly used by Kuranishi \cite{kuranishi1951everywhere} in the study of $2$-generation of semisimple Lie algebras of characteristic $0$, and were further inspected by Chistopolskaya \cite{chistopolskaya2018nilpotent} who was looking for pairs of nilpotent generators of Lie algebras over infinite fields. The relevance of consistent matrices to $2$-generation is as follows. Let 
\[
  \1 = E - I \in \slfrak_n(\F),
\]
where $E$ is the matrix with all entries equal to $1$ and $I$ is the identity matrix.

\begin{proposition}[\text{\cite[Theorem 2.2.3]{bois2009generators}, \cite[Lemma 1]{chistopolskaya2018nilpotent}}]
  \label{prop:adding_with_diagonal}
Let $D$ be a consistent matrix. Then $D$ and $\1$ generate the Lie algebra $\slfrak_n(\F)$.
\end{proposition}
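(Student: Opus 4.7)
The plan is to show that the Lie subalgebra $L \subseteq \slfrak_n(\F)$ generated by $D$ and $\1$ contains every off-diagonal matrix unit $E_{ij}$ (for $i \neq j$), and then to recover all traceless diagonal matrices from commutators of these. The whole argument hinges on the fact that $D$ acts diagonally on the matrix units via
\[
  [D, E_{ij}] = (\lambda_i - \lambda_j) E_{ij},
\]
so $\ad(D)$ has $E_{ij}$ as an eigenvector with eigenvalue $\lambda_i - \lambda_j$. The consistency condition is precisely what guarantees that, for $i \neq j$, these $n(n-1)$ eigenvalues are pairwise distinct (and nonzero).

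First I would write $\1 = \sum_{i \neq j} E_{ij}$ and apply successive powers of $\ad(D)$ to obtain, for each $k \geq 0$,
\[
  \ad(D)^k(\1) \;=\; \sum_{i \neq j} (\lambda_i - \lambda_j)^k E_{ij} \;\in\; L.
\]
Taking $k = 0, 1, \dots, n(n-1)-1$ yields a linear system whose coefficient matrix is the Vandermonde built from the differences $\{\lambda_i - \lambda_j : i \neq j\}$. By consistency these differences are pairwise distinct in $\F$, hence the Vandermonde is invertible over $\F$, so every $E_{ij}$ with $i \neq j$ lies in $L$.

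Once the off-diagonal matrix units are available, the bracket $[E_{ij}, E_{ji}] = E_{ii} - E_{jj}$ places every traceless diagonal matrix of this standard form in $L$ as well. Together, $\{E_{ij} : i \neq j\} \cup \{E_{ii} - E_{jj} : i \neq j\}$ spans $\slfrak_n(\F)$, so $L = \slfrak_n(\F)$.

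The only step that requires care is the Vandermonde inversion, i.e.\ checking that the distinctness of the differences $\lambda_i - \lambda_j$ over $\F$ really does hold under the stated condition; this is immediate since the second alternative $(i,k) = (j,l)$ only records the trivial coincidence $0 = 0$, leaving the nontrivial differences distinct. Everything else is formal linear algebra and a single commutator identity, and the argument is characteristic-free for any $\F$ in which consistent matrices exist.
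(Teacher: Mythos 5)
Your proof is correct and follows essentially the same route as the paper: apply successive powers of $\ad(D)$ to $\1$, invert the resulting Vandermonde system using the distinctness of the differences $\lambda_i-\lambda_j$ guaranteed by consistency, and recover the traceless diagonal matrices from $[E_{ij},E_{ji}]=E_{ii}-E_{jj}$. The only differences are cosmetic (bracket orientation and taking exactly $n(n-1)$ powers rather than $n^2-n+1$ matrices).
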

\begin{proof}
Let $D = \diag(\lambda_1, \lambda_2, \dots, \lambda_n)$ and observe that for every $k \geq 1$, we have
\[
  [\1, D, \stackrel{k}{\dots}, D] = 
  \left( (\lambda_j - \lambda_i)^k \right)_{i,j}.
\]
Thus the coefficients of the matrices $\1$, $[\1, D]$, \dots, $[\1, D, \stackrel{n^2-n}{\dots}, D]$ written in the standard basis $\{ E_{ij} \mid i \neq j\}$ form a Vandermonde matrix with determinant
\[
  \prod_{
    \substack{i, j, k, l \\ 
    i \neq j, k \neq l, (i,j) \neq (k,l)}
    } 
  \left( (\lambda_i - \lambda_j) - (\lambda_k -\lambda_l) \right),
\]
which is nonzero as $D$ is consistent. Therefore the Lie algebra generated by $D$ and $\1$ contains the set $\{ E_{ij} \mid i \neq j\}$, and as $[E_{ij}, E_{ji}] = E_{ii}- E_{jj}$, it thus contains all traceless matrices.
\end{proof}

\section{Bounded rank}
\label{sec:bounded_rank}

Using consistent matrices, we can easily find $2$ generators for Lie algebras $\slfrak_n(\F_q)$ of \emph{bounded rank}, meaning that $n \geq 2$ is fixed and either the characteristic $p$ of the field $\F_q$ is large enough in terms of $n$ or $q$ is large enough in terms of $p$ and $n$.

Our argument here goes through the following remark. Suppose $S = \{s_1, s_2, \dots, s_n \}$ is a subset of the vector space $\F_q$ over $\F_p$ consisting of $n$ elements. We call $S$ \emph{sharply traceless} if 
\[
  \textstyle \sum_{i = 1}^n s_i = 0
  \quad
  \text{and}
  \quad
  \dim_{\F_p} \langle S \rangle = n-1.
\]
In other words, the only linear relation up to a scalar between the vectors of $S$ is that their sum is $0$. 

\begin{lemma}
\label{lem:sharply_traceless_set_is_consistent}
A sharply traceless set of size $n$ is consistent if and only if $p \neq 2$ and $(n,p) \neq (3, 3)$.
\end{lemma}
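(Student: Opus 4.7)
The plan is to convert the consistency condition into a linear-dependence question over $\F_p$ and then run a short case analysis. Consistency of $S$ fails exactly when there exist indices with $i \neq j$, $k \neq l$, and $(i,j) \neq (k,l)$ satisfying
\[
  s_i - s_j - s_k + s_l = 0.
\]
Sharp tracelessness guarantees that the only $\F_p$-linear dependence among the $s_i$ is, up to scalar, $s_1 + \cdots + s_n = 0$; equivalently, the kernel of the evaluation map $\F_p^n \to \langle S \rangle$ is spanned by $(1, 1, \ldots, 1)$. Hence the coefficient vector of any violating relation must be proportional to $(1, 1, \ldots, 1)$. I would use this constraint throughout.

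Next, I would split into cases according to the overlap pattern of $\{i,j\}$ and $\{k,l\}$. When all four indices are distinct, the coefficient vector consists of entries $\pm 1$ at four positions and zeros elsewhere; any zero entry forces the scalar to be $0$, which is incompatible with the nonzero $\pm 1$ entries. This already eliminates $n \geq 5$ in this case, and for $n = 4$ proportionality to $(1,1,1,1)$ would require $1 = -1$, ruled out in odd characteristic. When exactly one index is shared, distinctness of the $s_i$ rules out the possibilities $k = i$ and $l = j$ (each would force equality of two distinct $s$'s), leaving $k = j$ or $l = i$; both produce a relation of the form $2 s_a - s_b - s_c = 0$ on three distinct indices. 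The coefficient vector $(2, -1, -1, 0, \ldots, 0)$ either contains a zero, forcing $2 = 0$ which is impossible for odd $p$, or, when $n = 3$, must be proportional to $(1, 1, 1)$, which happens precisely when $p = 3$. The remaining configuration $\{i,j\} = \{k,l\}$ (with $(k,l) = (j,i)$) reduces to $2(s_i - s_j) = 0$ and is excluded by distinctness of the $s_i$ in odd characteristic.

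The casework shows that no violation of consistency can occur unless $(n, p) = (3, 3)$. For the converse, I would verify directly that consistency does fail in this exceptional case: the sharp tracelessness relation $s_1 + s_2 + s_3 = 0$ becomes $s_1 + s_2 = 2 s_3$ in characteristic $3$, which rearranges to $s_1 - s_3 = s_3 - s_2$, yielding a coincidence of differences with $(1, 3) \neq (3, 2)$. The main obstacle is simply making the case split exhaustive and clean; once the problem is recast as proportionality of coefficient vectors in $\F_p^n$, everything else is immediate arithmetic in $\F_p$.
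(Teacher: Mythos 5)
Your proof is correct and follows essentially the same route as the paper's: recast a violating coincidence of differences as a linear dependence among at most four of the $s_i$, use sharp tracelessness to force its coefficient vector to be proportional to $(1,\dots,1)$, and finish by a case analysis on $n$ and $p$, together with the direct verification that $s_1+s_2+s_3=0$ yields $s_1-s_3=s_3-s_2$ when $p=3$. Your treatment of the index-overlap patterns is merely a more explicit version of the casework the paper compresses into a few sentences.
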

\begin{proof}
No consistent sets exist in even characteristic. Assume now that $p \neq 2$. 
The relation violating consistency $s_i - s_j = s_k - s_l$ involves at most $4$ elements, so the set is consistent if $n \geq 5$. When $n = 4$, there are no linear relations involving at most $3$ terms, so a violating relation can be a scalar multiple of the sum zero relation only when $p = 2$. Similarly, when $n = 3$, consistency can be violated only when $p = 2$ or $p = 3$ (for example in the form $s_1 - s_2 = s_2  - s_3$, a relation that is equivalent to sum zero). Conversely, in case $(n,p) = (3,3)$, a sharply traceless set $\{ s_1, s_2, s_3 \}$ always satisfies $s_3 - s_2 = s_2 - s_1$, so it is not consistent. Finally, when $n = 2$, a sharply traceless set $\{ s, -s \}$ with $s \neq 0$ is consistent provided $p \neq 2$.
\end{proof}

It is not difficult to construct sharply traceless sets as long as the field is large enough.

\begin{proposition}
Let $p \neq 2$, $(n,p) \neq (3,3)$, $q \geq p^{n-1}$. Then $\slfrak_n(\F_q)$ is $2$-generated.
\end{proposition}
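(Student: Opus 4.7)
The plan is to chain the two results already established. By \Cref{prop:adding_with_diagonal}, if $D = \diag(\lambda_1, \dots, \lambda_n)$ is a consistent matrix in $\slfrak_n(\F_q)$, then $D$ together with $\1$ generates $\slfrak_n(\F_q)$. By \Cref{lem:sharply_traceless_set_is_consistent}, under the hypothesis $(n,p) \neq (3,3)$ it suffices to exhibit a sharply traceless subset of $\F_q$ of size $n$, i.e.\ $n$ distinct nonzero elements summing to $0$ whose $\F_p$-span has dimension exactly $n-1$. So the whole proposition reduces to constructing such a subset.

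The hypothesis $q \geq p^{n-1}$ is precisely what is needed for this: it says that $[\F_q : \F_p] \geq n-1$, so we may pick $\F_p$-linearly independent elements $v_1, \dots, v_{n-1}$ in $\F_q$. I would then set
\[
  s_i = v_i \text{ for } i=1,\dots,n-1, \qquad s_n = -(v_1 + v_2 + \cdots + v_{n-1}),
\]
and take $S = \{s_1, \dots, s_n\}$. By construction, $s_1 + \cdots + s_n = 0$, and since $s_n \in \langle v_1, \dots, v_{n-1}\rangle_{\F_p}$ we have $\langle S \rangle_{\F_p} = \langle v_1, \dots, v_{n-1} \rangle_{\F_p}$, which has dimension $n-1$.

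It remains to check that the $s_i$ are distinct and nonzero. Nonzeroness of $v_1, \dots, v_{n-1}$ follows from $\F_p$-linear independence, and $s_n \neq 0$ because $-\sum v_i$ is a nontrivial combination. For distinctness, the only potentially dangerous equality is $s_n = v_i$ for some $i \leq n-1$, which would give the linear relation $2 v_i + \sum_{j \neq i} v_j = 0$; since $p > 2$ (assumed throughout this section) and the $v_j$ are independent, this cannot happen. Hence $S$ is sharply traceless of size $n$. Applying \Cref{lem:sharply_traceless_set_is_consistent} and then \Cref{prop:adding_with_diagonal} to $D = \diag(s_1, \dots, s_n)$ finishes the proof.

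There is no real obstacle in this argument — the construction is essentially forced by the definitions, and the only subtle point is the use of $p \neq 2$ to separate $s_n$ from the $v_i$. The real work is pushed into the two previous results (\Cref{prop:adding_with_diagonal} and \Cref{lem:sharply_traceless_set_is_consistent}); the deeper difficulty, handled in later sections, is carrying out an analogous construction when $q$ is small compared to $p^{n-1}$.
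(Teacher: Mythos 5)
Your proof is correct and is essentially the paper's own argument: take $n-1$ elements of $\F_q$ that are $\F_p$-linearly independent (possible since $[\F_q:\F_p]\geq n-1$), append the negative of their sum to get a sharply traceless set, and then invoke \Cref{lem:sharply_traceless_set_is_consistent} and \Cref{prop:adding_with_diagonal}. The only difference is that you spell out the distinctness/nonzeroness check, which the paper leaves implicit.
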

\begin{proof}
Let $k \geq n-1$ and let $\F_{p^k}$ be an extension of $\F_p$ of degree $k$ with some vector space basis $B = \{ \lambda_1, \lambda_2, \dots, \lambda_k \}$. Let
\[
  \textstyle S = \{ \lambda_1, \lambda_2, \dots, \lambda_{n-1}, - \sum_{i=1}^{n-1} \lambda_i \} \subseteq \F_{p^k}.
\]
Note that $S$ is sharply traceless, therefore it is consistent and so $\slfrak_n(\F_q)$ can be generated by $2$ elements.
\end{proof}

On the other hand, whenever we have a consistent set $\{ \lambda_1, \lambda_2, \dots, \lambda_n \}$ with entries in the integers, we can project it modulo $p$. The projected set is consistent as long as $p > 4 \max \{ \abs{\lambda_1}, \abs{\lambda_2}, \dots, \abs{\lambda_n} \}$, in which case the associated diagonal matrix $\diag(\lambda_1, \lambda_2, \dots , \lambda_q)$ together with $\1$ modulo $p$ generate $\mathfrak{sl}_n(\F_p)$ over $\F_p$, and therefore also generate $\mathfrak{sl}_n(\F_q)$ over $\F_q$.

Consistent sets with entries in positive integers are intimately related to the concept of \emph{distinct sum sets} \cite{GS}. These are sets of integers \( \{ a_i \}_{i = 0}^n \) with  
\[
  0 = a_0 < a_1 < \dots < a_n
\]  
and the property that the sums $a_i + a_j$ for $i < j$ represent each integer at most once. From such a set, one obtains the consistent set of integers
\[
  \textstyle S = \{ a_1, a_2, \dots, a_n, -\sum_{i = 1}^n a_i \},
\]  
all of whose elements are of absolute value at most $n \cdot a_n$. 

Much is known about existence of distinct sum sets. It is shown in \cite[Theorem 1 (3)]{GS} that given $n$, one can construct such a set whose every element is of size at most $n^2 + O(n^{36/23})$, and therefore the associated consistent set consists of elements with absolute values bounded by $O(n^3)$. As a consequence, for $p = \Omega(n^3)$, the Lie algebra $\mathfrak{sl}_n(\mathbb{F}_q)$ is $2$-generated. 

\section{Unbounded rank}
\label{sec:unbounded_rank}

We now construct generator matrices $\slfrak_n(\F_q)$ in \emph{unbounded rank}, meaning that $n$ is arbitrarily large and both $p$ and $q$ might be small. Note that it follows from lower bounds on distinct sum sets in residue rings \cite[Theorem 1 (4)]{GS} that no consistent sets exist over $\F_p$ when $p < n^2 - O(n)$, so the method we used in bounded rank Lie algebras will have to be adapted.

\subsection{Polynomials with consistent roots}

Our strategy in unbounded rank is to find an appropriate irreducible polynomial $f \in \F_q[x]$ whose roots form a consistent set, and thus its companion matrix $C$ belongs to $\slfrak_n(\F_q)$ and diagonalizes in some finite extension $\F$ of $\F_q$ to a consistent matrix $D = P C P^{-1} \in \slfrak_n(\F)$. The matrices $D$ and $\1$ then generate the Lie algebra $\slfrak_n(\F)$. 

\begin{example}
Let $n = 7$ and $q = 3$. Choose a representation of the finite field $\F_{3^7}$ as an extension of $\F_3$, say as
\[
  \F_{3^7} = \F_3[\omega]/(\omega^7-\omega^2+1).
\]
Now consider a random irreducible polynomial
\[
  x^7-x^5+x^3-x^2-x-1 \in \F_3[x].  
\]
Its companion matrix $C \in \slfrak_7(\F_3)$ diagonalizes over $\F_{3^7}$ as $D = P C P^{-1}$, where
\[
  D = \diag(\omega^{1766}, \omega^{1776}, \omega^{2046}, \omega^{592}, \omega^{682}, \omega^{926}, \omega^{956}).
\]
The matrix $D$ is consistent over the field $\F_{3^7}$, so it generates $\slfrak_7(\F_{3^7})$ together with the matrix $\1$. Observe that
\[
  P^{-1} \1 P =
  \begin{pmatrix}
    \cdot & \cdot & 2 & \cdot & 1 & 2 & 2 \\
    \cdot & 2 & \cdot & \cdot & \cdot & \cdot & \cdot \\
    \cdot & \cdot & 2 & \cdot & \cdot & \cdot & \cdot \\
    \cdot & \cdot & \cdot & 2 & \cdot & \cdot & \cdot \\
    \cdot & \cdot & \cdot & \cdot & 2 & \cdot & \cdot \\
    \cdot & \cdot & \cdot & \cdot & \cdot & 2 & \cdot \\
    \cdot & \cdot & \cdot & \cdot & \cdot & \cdot & 2
    \end{pmatrix},
\]
a matrix that surprisingly belongs to $\slfrak_7(\F_3)$.
\end{example}

The following lemma shows that it is always possible to pull back the generating pair $D$ and $\1$ to the original $\slfrak_n(\F_q)$, and moreover explains the peculiar form of the matrix $P^{-1} \1 P$ as in the example. Recall the \emph{Frobenius endomorphism}
\[
  \Fr \colon \F_{q^n} \to \F_{q^n}, 
  \quad
  \lambda \mapsto \lambda^q.
\]
The entries of $P$ and $P^{-1} \1 P$ are related to the \emph{field trace} of the extension $\F_{q^{n}}$ of $\F_q$, which computes, for a given $\alpha \in \F_{q^{n}}$, the sum of its Galois conjugates:
\[
  \textstyle \tr (\alpha) = \sum_{i=0}^{n-1} \Fr^i(\alpha) = \sum_{i=0}^{n-1} \alpha^{q^{i}}  \in \F_q.
\]

\begin{lemma}
\label{lem:coefficients}
Let $f \in \F_q[x]$ be an irreducible polynomial of degree $n$ and let $\alpha$ be a root of $f$ in a splitting field. Let $C$ be the companion matrix of $f$ with diagonal form
\[
  D = P C P^{-1} = \diag\left(\alpha, \Fr(\alpha), \dots, \Fr^{n-1}(\alpha) \right).
\]
Then
\begin{equation*}
  \label{eq: expression conjugated}
  P^{-1} \1 P = \begin{pmatrix}
  n & \tr(\alpha)  & \cdots & \tr(\alpha^{n-1}) \\
  \cdot & \cdot & \cdots & \cdot   \\
  \vdots & \vdots  & & \vdots  \\
  \cdot & \cdot  & \cdots & \cdot
   \end{pmatrix} - I.
 \end{equation*}
In particular, $P^{-1} \1 P$ belongs to $\slfrak_n(\F_q).$
\end{lemma}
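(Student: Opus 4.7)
The plan is to compute $P$ explicitly as a Vandermonde matrix built from the Frobenius conjugates of $\alpha$, then factor $\1 + I = E$ as a rank-one product and track what conjugation by $P$ does to each factor.

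First I would fix a convention for the companion matrix $C$ of $f(x) = x^n + c_{n-1} x^{n-1} + \cdots + c_0$, and verify by direct calculation that for any root $\beta$ of $f$ the row vector $(1, \beta, \beta^2, \dots, \beta^{n-1})$ is a left eigenvector of $C$ with eigenvalue $\beta$. Stacking these rows for $\beta = \alpha_i := \Fr^i(\alpha)$, $i = 0, 1, \dots, n-1$, produces an invertible Vandermonde matrix $V$ with $V C = DV$, and by uniqueness (up to rescaling rows) we may take $P = V$, i.e.\ $P_{ij} = \alpha_i^{\,j}$ for $0 \le i, j \le n-1$.

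Next, write $\1 + I = E = v v^{\transpose}$ with $v = (1, 1, \dots, 1)^{\transpose}$, so that $P^{-1} \1 P = (P^{-1} v)(v^{\transpose} P) - I$. The row $v^{\transpose} P$ has $j$-th entry $\sum_{i=0}^{n-1} \alpha_i^{\,j} = \sum_{i=0}^{n-1} \Fr^i(\alpha^j) = \tr(\alpha^j)$, which is exactly the first row of the claimed matrix (with $\tr(\alpha^0) = n$). For the column $P^{-1} v$, I would solve $P w = v$ by viewing the equations $\sum_j w_j \alpha_i^{\,j} = 1$ as saying that the polynomial $g(x) = \sum_j w_j x^j$ of degree $< n$ takes the value $1$ at the $n$ distinct points $\alpha_0, \alpha_1, \dots, \alpha_{n-1}$; hence $g \equiv 1$ and $w = e_1$. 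The product $e_1 \cdot v^{\transpose} P$ is then the matrix whose only nonzero row is the top one, matching the displayed formula after subtracting $I$.

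Finally, for the membership $P^{-1} \1 P \in \slfrak_n(\F_q)$, I would simply observe that each entry of the resulting matrix is either $-1$, $n-1$, or a field trace $\tr(\alpha^k)$, and the field trace lands in $\F_q$ by definition; tracelessness is automatic because conjugation preserves it. The computation is essentially routine once the Vandermonde description of $P$ is in hand; the only step that requires a bit of care is pinning down which row/column convention for the companion matrix makes $P$ come out as the Vandermonde matrix rather than its transpose, but once the convention is fixed the remaining identifications are forced.
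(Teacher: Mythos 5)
Your proposal is correct and follows essentially the same route as the paper: identify $P$ with the Vandermonde matrix of left eigenvectors $(1,\Fr^i(\alpha),\dots,\Fr^i(\alpha)^{n-1})$, exploit the rank-one factorization $E = v v^{\transpose}$, and compute $v^{\transpose}P$ as the row of traces and $P^{-1}v = e_1$ (the paper gets the latter by simply noting that the first column of $P$ is all ones, i.e.\ $Pe_1 = v$, rather than via your interpolation argument, but these are the same observation).
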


\begin{proof}
The polynomial $f$ splits completely in $\F_{q^n}$. Its roots are the Galois conjugates of $\alpha$, namely $\{ \alpha, \Fr(\alpha), \dots, \Fr^{n-1}(\alpha) \}$. These are in turn the eigenvalues of $C$. Let $e_1, e_2, \dots, e_n$ be the standard basis of the vector space $(\F_{q^n})^n$. The \emph{left} eigenvector of $C$ corresponding to $\alpha$ is
\[
  v_0 = \left(1, \alpha, \alpha^2, \dots, \alpha^{n-1} \right),
\]
and the eigenvector corresponding to $\Fr^i(\alpha)$ is $v_i = \Fr^i(v_0)$, where the Frobenius endomorphism is applied componentwise. The rows of $P$ consist precisely of the vectors $v_0, v_1, \dots, v_{n-1}$. Setting $e = e_1 + e_2 + \dots + e_n$, we thus have $P e_1 = e$, hence $P^{-1} \cdot e = e_1$. This gives
\[
  e_i^{\transpose} P^{-1} e =
  \begin{cases*}
    1 & $i = 1$ \\
    0 & $i \neq 1$
  \end{cases*}
  \quad \text{and therefore} \quad
  e_i^{\transpose} P^{-1} E =
  \begin{cases*}
    e^{\transpose} & $i = 1$ \\
    0 & $i \neq 1$,
  \end{cases*}
\]
as $E$ is the matrix with all columns equal to $e$. Now, since
\[
  e^{\transpose} P =
  v_0 + v_1 + \dots + v_{n-1} =
  \left( \tr(1), \tr(\alpha), \tr(\alpha^2), \dots, \tr(\alpha^{n-1}) \right),
\]
we obtain
\[
P^{-1} E P =
\begin{pmatrix}
  n & \tr(\alpha)  & \cdots & \tr(\alpha^{n-1}) \\
  \cdot & \cdot  & \cdots & \cdot   \\
   \vdots & \vdots  & & \vdots  \\
    \cdot & \cdot  & \cdots & \cdot
   \end{pmatrix}.
\]
The lemma follows as $\1 = E - I$.
\end{proof}

\begin{corollary}
  \label{cor:consistent_splitting_field_2gen}
Let $f \in \F_q[x]$ be an irreducible polynomial of degree $n$ whose roots in a splitting field form a consistent set. Then $\slfrak_n(\F_q)$ is $2$-generated.
\end{corollary}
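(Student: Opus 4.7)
The plan is to lift the generating pair $(D, \1)$ provided by \Cref{prop:adding_with_diagonal} back down to $\slfrak_n(\F_q)$ via the change of basis matrix $P$ from \Cref{lem:coefficients}, and then pass from an $\F_{q^n}$-generation statement to an $\F_q$-generation statement by a standard scalar extension argument.

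First I would verify that the companion matrix $C$ of $f$ actually lies in $\slfrak_n(\F_q)$. Since the roots of $f$ are assumed to form a consistent set, by definition they lie in $\slfrak_n$ and in particular sum to zero. This means the coefficient of $x^{n-1}$ in $f$ vanishes, so $\tr(C) = 0$. Moreover, because $f$ is irreducible of degree $n \geq 2$ over a finite field, its roots are automatically distinct and nonzero, matching the remaining requirements of a consistent matrix.

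Next, over the splitting field $\F_{q^n}$, I would diagonalize $C$ as $D = PCP^{-1}$, where $D = \diag(\alpha, \Fr(\alpha), \dots, \Fr^{n-1}(\alpha))$ is consistent by hypothesis. Applying \Cref{prop:adding_with_diagonal} yields that $D$ and $\1$ generate $\slfrak_n(\F_{q^n})$ as an $\F_{q^n}$-Lie algebra, so after conjugating by $P$ the pair $(C, P^{-1} \1 P)$ generates $\slfrak_n(\F_{q^n})$ as an $\F_{q^n}$-Lie algebra. By \Cref{lem:coefficients}, the matrix $P^{-1} \1 P$ actually has all its entries in $\F_q$, and of course so does $C$.

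Finally, to pass from $\F_{q^n}$-generation to $\F_q$-generation, let $L \subseteq \slfrak_n(\F_q)$ denote the $\F_q$-Lie subalgebra generated by $C$ and $P^{-1} \1 P$. Then $L \otimes_{\F_q} \F_{q^n}$ sits inside $\slfrak_n(\F_{q^n})$, is closed under the bracket, and contains both generators, so it coincides with $\slfrak_n(\F_{q^n})$. Comparing dimensions, $\dim_{\F_q} L = \dim_{\F_{q^n}}(L \otimes_{\F_q} \F_{q^n}) = n^2 - 1$, forcing $L = \slfrak_n(\F_q)$. I do not anticipate a serious obstacle here: every ingredient is either supplied by the two preceding results or is a one-line dimension argument; the only point one has to be careful about is that the descent from $\F_{q^n}$ to $\F_q$ really does use both that $P^{-1} \1 P$ is defined over $\F_q$ (to have $L$ live inside $\slfrak_n(\F_q)$) and that $C$ is diagonalizable to a consistent matrix over the extension (to apply \Cref{prop:adding_with_diagonal}).
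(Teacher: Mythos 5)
Your proposal is correct and follows essentially the same route as the paper: diagonalize the companion matrix $C$ over the splitting field, apply \Cref{prop:adding_with_diagonal} to the pair $(D, \1)$, conjugate by $P$, and invoke \Cref{lem:coefficients} to see that both $C$ and $P^{-1}\1 P$ lie in $\slfrak_n(\F_q)$. The only cosmetic difference is in the final descent step, where the paper spans $\slfrak_n(\F)$ by $n^2-1$ Lie monomial evaluations and uses equivariance of Lie polynomials, while you package the same dimension count as a scalar-extension argument with $L \otimes_{\F_q} \F_{q^n}$.
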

\begin{proof}
Let $\F$ be a splitting field for $f$. Let $C$ be the companion matrix of $f$. Since the roots of $f$ form a consistent set, we have $C \in \slfrak_n(\F_q)$ and the eigenvalues of $C$, being the roots of $f$, are all distinct in $\F$. Therefore $C$ diagonalizes to a consistent matrix $D = P C P^{-1}$ for some $P \in \GL_n(\F)$, which generates $\slfrak_n(\F)$ together with the matrix $\1$. Thus, according to \cite[Proposition 1.1.3]{bois2009generators}, there exist $n^2 - 1$ Lie monomials $\{ g_i(X,Y) \}_i$ in two variables whose evaluations in $(D, \1)$ span the vector space $\slfrak_n(\F)$ over $\F$. Since Lie polynomials are equivariant (meaning that if $g(X,Y)$ is a Lie polynomial, then $P^{-1} g(X,Y) P = g(P^{-1} X P, P^{-1} Y P)$) the matrices $g_i(C, P^{-1} \1 P ) = P^{-1} g_i(D, \1) P$ also span $\slfrak_n(\F)$ over $\F$. Now, both $C$ and $P^{-1} \1 P$ belong to $\slfrak_n(\F_q)$, and as each $g_i$ is a Lie monomial, its evaluation $g_i(C, P^{-1} \1 P)$ is in $\slfrak_n(\F_q)$. Therefore $C$ and $P^{-1}\1P$ generate $\slfrak_n(\F_q)$ as a Lie algebra over $\F_q$.
\end{proof}

\subsection{Normal elements}

We proceed by exhibiting an irreducible polynomial $f \in \F_q[x]$ of degree $n$ whose roots form a consistent set in the case when $\slfrak_n(\F_q)$ is \emph{simple}, i.e., the characteristic $p$ and degree $n$ are coprime. 

In order to do so, we make use of the concept of a \emph{normal element}, which is an element $\alpha \in \F_{q^n}$ whose Galois conjugates $\{ \alpha, \alpha^q, \dots, \alpha^{q^{n-1}} \}$ form a basis of $\F_{q^n}$ as a vector space over $\F_q$. It is well-known that normal elements exist in any finite field \cite{Hensel}.

Given a normal element $\alpha$, consider $\beta = \alpha - \tr(\alpha)/n$. One can indeed construct this element since $n$ is invertible in $\F_q$. Note that $\beta$ is a traceless element and as $\alpha$ is normal, the set of Galois conjugates of $\beta$ forms a consistent set.\footnote{The Galois conjugates of $\beta$ are $\alpha^{p^i} - \tr(\alpha)/n$, so their pairwise differences are the same as the pairwise differences of Galois conjugates of $\alpha$.} Taking $f$ to be the minimal polynomial of $\beta$ then does the job.

\begin{corollary}
\label{cor:coprime}
Let $p \neq 2$ and $n$ coprime to $p$. Then $\slfrak_n(\F_q)$ is $2$-generated.
\end{corollary}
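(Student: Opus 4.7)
The plan is to realize the traceless element with a normal-basis-like orbit structure, and then invoke the previous corollary which reduces $2$-generation to exhibiting an irreducible polynomial in $\F_q[x]$ of degree $n$ whose roots form a consistent set in a splitting field.

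First, I would fix a normal element $\alpha \in \F_{q^n}$ (whose existence over any finite field is the classical theorem of Hensel referenced above). Using that $n$ is coprime to $p$, and therefore invertible in $\F_q$, I would set
\[
  \beta = \alpha - \tr(\alpha)/n \in \F_{q^n}.
\]
Since $\tr$ is $\F_q$-linear and $\tr(1) = n$, this element is traceless, so the companion matrix of its minimal polynomial $f$ will automatically lie in $\slfrak_n(\F_q)$. Next I would check that $\beta$ still generates $\F_{q^n}$ over $\F_q$; this is immediate because $\alpha = \beta + \tr(\alpha)/n$ and the normality of $\alpha$ forces $\F_q(\alpha) = \F_{q^n}$ (otherwise fewer than $n$ Galois conjugates could not span $\F_{q^n}$). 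Hence $f$ is irreducible of degree $n$.

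The main step is verifying that the roots of $f$, i.e.\ the Galois conjugates $\{\beta, \Fr(\beta), \dots, \Fr^{n-1}(\beta)\}$, form a consistent set in $\F_{q^n}$. Since $\beta$ and $\alpha$ differ by an element of $\F_q$, the pairwise differences of the conjugates of $\beta$ coincide with those of $\alpha$, so it suffices to argue for $\alpha$. If $\Fr^i(\alpha) - \Fr^j(\alpha) = \Fr^k(\alpha) - \Fr^l(\alpha)$ with $i \neq j$ and $k \neq l$, then
\[
  \Fr^i(\alpha) + \Fr^l(\alpha) - \Fr^j(\alpha) - \Fr^k(\alpha) = 0
\]
is an $\F_q$-linear relation among the conjugates of $\alpha$. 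Because $\alpha$ is normal, these conjugates are $\F_q$-linearly independent, which forces the multiset $\{i,l\}$ to equal $\{j,k\}$. Combined with $i \neq j$ and $k \neq l$, this yields $(i,j) = (k,l)$, which is precisely the consistency condition. I also need to record that each $\Fr^i(\beta)$ is nonzero: $\beta = 0$ would mean $\alpha \in \F_q$, contradicting normality for $n \geq 2$, and Frobenius is an automorphism so it preserves being nonzero.

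With consistency of the roots established, the preceding corollary applies directly to $f$ and concludes that $\slfrak_n(\F_q)$ is $2$-generated. I do not expect any genuinely hard step here; the subtle point is purely bookkeeping, namely making sure that shifting by the scalar $\tr(\alpha)/n$ (which is what turns a normal element into a traceless one) does not spoil either the consistency of the differences or the fact that $\beta$ still has degree $n$ over $\F_q$.
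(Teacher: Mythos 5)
Your proposal is correct and follows essentially the same route as the paper: take a normal element $\alpha$, shift to $\beta = \alpha - \tr(\alpha)/n$ using the invertibility of $n$, note that the pairwise differences of the conjugates are unchanged, and deduce consistency from the $\F_q$-linear independence of the conjugates of $\alpha$ (in odd characteristic). The paper leaves the linear-independence step to a footnote; you spell it out, but the argument is the same.
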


\subsection{Sharply traceless elements}

In case $p$ divides $n$, normal elements still exist, but there is no evident way of producing a \emph{traceless} matrix from them. 

In order to proceed, we will replace normal elements with their traceless version. To be more precise, we call an element $\alpha \in \F_{q^n}$ \emph{sharply traceless} if the set of its Galois conjugates over $\F_q$ forms a sharply traceless set. In other words, $\alpha$ is traceless, meaning that the sum of the Galois conjugates of $\alpha$ is zero, and this is the only linear relation up to a scalar between the Galois conjugates.

\begin{theorem}
\label{thm:sharply_traceless}
Sharply traceless elements exist in any extension $\F_{q^n}/\F_q$.
\end{theorem}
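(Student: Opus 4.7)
My plan is to construct a sharply traceless element directly from a normal element. By the normal basis theorem recalled above, there exists a normal element $\alpha\in\F_{q^n}$, so that the Galois conjugates $\{\alpha,\alpha^q,\ldots,\alpha^{q^{n-1}}\}$ form an $\F_q$-basis of $\F_{q^n}$, and in particular an $\F_p$-linearly independent set. I would take
\[
  \beta = \alpha^q - \alpha
\]
and show that $\beta$ is sharply traceless.

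First I would observe that $\tr(\beta)$ telescopes around the Galois orbit to $0$, since $\sum_i(\alpha^{q^{i+1}}-\alpha^{q^i})=0$. Next, I would analyze any $\F_p$-linear relation $\sum_i c_i\beta^{q^i}=0$ by reindexing it as
\[
  \sum_{j=0}^{n-1}(c_{j-1}-c_j)\alpha^{q^j}=0 \quad(\text{indices mod }n),
\]
and using the $\F_p$-independence of the $\alpha^{q^j}$ to force $c_{j-1}=c_j$ for every $j$, so that the coefficient vector $(c_i)$ is constant. Thus the space of $\F_p$-linear relations among the conjugates of $\beta$ is the one-dimensional line through the trace relation $(1,\ldots,1)$; provided the conjugates are distinct, the $\F_p$-span of the orbit has dimension exactly $n-1$, which is the sharply traceless condition.

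The hard part will be the remaining distinctness check. A coincidence $\beta^{q^i}=\beta^{q^j}$ with $i\neq j$ would give a relation whose coefficient vector has entries $1$, $-1$, and $0$, and these cannot all be equal in $\F_p$ when $p>2$. So distinctness follows cleanly under the paper's standing assumption of odd characteristic, preventing $\beta$ from landing in a proper subfield of $\F_{q^n}$. This is, incidentally, precisely the step that breaks in characteristic $2$ (where $1=-1$ allows $\beta$ to drop into $\F_q$, as already happens in $\F_4/\F_2$), consistent with the paper's separate treatment of even characteristic later on.
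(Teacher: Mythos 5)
Your proof is correct, and it takes a genuinely different route from the paper. You construct a sharply traceless element explicitly as $\beta = \alpha^q - \alpha$ for a normal element $\alpha$ (an additive Hilbert~90 flavour of argument): the telescoping kills the trace, and the reindexed relation $\sum_j (c_{j-1}-c_j)\alpha^{q^j}=0$ together with the linear independence of the conjugates of $\alpha$ forces the coefficient vector to be constant, so the relation space is exactly the line through $(1,\dots,1)$; your distinctness check is also right (the vector $e_i-e_j$ has a zero entry when $n\geq 3$, so it is never constant, and for $n=2$ it is constant only when $p=2$), and it correctly isolates the $\F_4/\F_2$ degeneracy. The paper instead characterises sharply traceless elements by the conditions $\Phi(\Fr)\alpha=0$ and $\Phi_\pi(\Fr)\alpha\neq 0$ for each prime divisor $\pi$ of $\Phi=(x^n-1)/(x-1)$, decomposes $\F_{q^n}$ into the corresponding root subspaces of $\Fr$, and deduces existence from positivity of the count $\prod_{\pi} q^{(\mult(\pi)-1)\deg\pi}\,(q^{\deg\pi}-1)$. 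Your argument is shorter and fully explicit; in the paper's own language it amounts to noting that the annihilator of $(\Fr-1)\alpha$ in the cyclic $\F_q[x]$-module $\F_{q^n}\cong\F_q[x]/(x^n-1)$ is exactly $(\Phi)$, so $\beta$ passes the criterion of Proposition \ref{prop:characterisation_with_polynomials}. What your construction does not give is the exact number of sharply traceless elements, which the paper's counting argument provides and then exploits in the two subsequent corollaries (the count and the asymptotic probability that a random traceless element is sharply traceless).
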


Generation of $\slfrak_n(\F_q)$ now follows as in the previous section.

\begin{corollary}
\label{cor:non-coprime}
Let $p \neq 2$ and $(n,p) \neq (3,3)$. Then $\slfrak_n(\F_q)$ is $2$-generated.
\end{corollary}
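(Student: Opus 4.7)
The plan is to deduce the corollary directly from \Cref{thm:sharply_traceless} by feeding a sharply traceless element into the consistent-roots criterion established right after \Cref{lem:coefficients}. Concretely, I would pick an element $\alpha \in \F_{q^n}$ whose Galois conjugates over $\F_q$ form a sharply traceless set, and let $f \in \F_q[x]$ be its minimal polynomial. Because a sharply traceless set is by definition a set of $n$ distinct elements, $\alpha$ has $n$ distinct Galois conjugates over $\F_q$, so $\alpha$ generates the extension $\F_{q^n}/\F_q$ and $f$ is irreducible of degree~$n$.

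The next step is to upgrade sharp tracelessness of the root set of $f$ to full consistency. This is exactly the content of \Cref{lem:sharply_traceless_set_is_consistent}: a sharply traceless set of size $n$ is consistent unless $(n,p) = (3,3)$, the single case excluded from the hypothesis of the corollary. The borderline small case $n = 2$ is also covered, since the standing assumption of odd characteristic rules out $p = 2$ (where a set $\{s,-s\}$ would collapse).

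With $f$ in hand --- irreducible of degree $n$, with roots forming a consistent set --- the unnamed corollary following \Cref{lem:coefficients} applies verbatim and produces a generating pair for $\slfrak_n(\F_q)$: the companion matrix $C$ of $f$ together with the conjugate $P^{-1} \1 P$. Both of these lie in $\slfrak_n(\F_q)$, the first because a consistent matrix is by definition traceless (so the sum of roots of $f$ vanishes), the second by \Cref{lem:coefficients}. The main obstacle in the argument is therefore not the present corollary, which is a clean chaining of earlier results, but rather \Cref{thm:sharply_traceless} itself: the hard work is to establish the existence of sharply traceless elements in arbitrary extensions $\F_{q^n}/\F_q$, and once that is granted the corollary merely collects the harvest.
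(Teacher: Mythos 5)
Your proof is correct and follows the same route as the paper: take a sharply traceless element (guaranteed by \Cref{thm:sharply_traceless}), observe that its Galois conjugates form a sharply traceless and hence, by \Cref{lem:sharply_traceless_set_is_consistent}, consistent set, and apply the corollary following \Cref{lem:coefficients} to its minimal polynomial. You in fact supply slightly more detail than the paper's two-line proof, e.g.\ checking that the minimal polynomial is irreducible of degree $n$ and that the companion matrix is traceless.
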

\begin{proof}
A sharply traceless element gives a sharply traceless set of its Galois conjugates. The result follows from Lemma \ref{lem:sharply_traceless_set_is_consistent}.
\end{proof}

Our proof of \Cref{thm:sharply_traceless} relies on viewing the finite field $\F_{q^n}$ as a module over the modular group algebra of the Galois group $G = \Gal(\F_{q^n}/\F_q)$, generated by the Frobenius endomorphism $\Fr$. More precisely, we consider the ring $\F_q[G] = \F_q[x]/(m(x))$, where $m(x) = x^n - 1$ is the minimal and at the same time characteristic polynomial of $\Fr$ (see \cite[proof of Theorem 2.35]{LN}). We can express the trace in terms of the polynomial 
\[
  \Phi(x) = \frac{m(x)}{x-1} = 
  x^{n-1} + x^{n-2} + \dots + 1,
\]
since, for an element $\alpha \in \F_{q^n}$, we have $\tr(\alpha) = \Phi(\Fr)\alpha$. Consider now the module homomorphism
\[
  \eta \colon \F_q[G] \to \F_{q^n}, \quad
  t \mapsto t \cdot \alpha.
\]
Note that $\eta$ is an isomorphism precisely when $\alpha$ is a normal element in $\F_{q^n}$. Write $t = \sum_{i = 0}^{n-1} t_i \Fr^i$ and let $\epsilon(t) = \sum_{i = 0}^{n-1} t_i$ be the augmentation of $t$. Then
\[
  \textstyle \tr(t \cdot \alpha) = 
  \sum_{i = 0}^{n-1} t_i \tr(\Fr^i(\alpha)) = 
  \epsilon(t) \cdot \tr(\alpha).
\]
For a normal $\alpha$, we have $\tr(\alpha) \neq 0$, and so elements of trace $0$ in $\F_{q^n}$ correspond precisely to the image of the augmentation ideal under $\eta$. This ideal is generated by $\Fr - 1$, so traceless elements in $\F_{q^n}$ are generated by $(\Fr - 1)(\alpha) = \alpha^q - \alpha$ as a module over $\F_q[G]$. The sharply traceless ones among these can be understood as follows.

\begin{lemma}
  Let $\alpha$ be a normal element in $\F_{q^n}$ and $\pi$ a polynomial in $\F_q[x]$. Then $\beta = \pi(\Fr) (\Fr - 1) (\alpha)$ is sharply traceless if and only if $\pi$ is coprime to $\Phi$.
\end{lemma}
\begin{proof}
Suppose first that $\pi$ is coprime to $\Phi$. By B\'ezout, there are polynomials $a, b$ in $\F_q[x]$ so that $a \pi + b \Phi = 1$. Evaluating in $\Fr$ and applying to $(\Fr - 1)(\alpha)$, we obtain $a(\Fr)(\beta) = (\Fr - 1)(\alpha)$. Hence if some polynomial $c$ in $\F_q[x]$ satisfies $c(\Fr)(\beta) = 0$, then also $c(\Fr)(\Fr - 1)(\alpha) = 0$, and since $\alpha$ is normal, it follows that $\Phi$ divides $c$. Thus $\beta$ is sharply traceless.

Conversely, suppose that $\beta = \pi(\Fr) (\Fr - 1)(\alpha)$ is sharply traceless element. Note
\[
  \frac{\Phi}{\gcd(\pi, \Phi)}(\Fr)(\beta) = m(\Fr) \frac{\pi}{\gcd(\pi, \Phi)}(\Fr) (\alpha) = 0.
\]
As $\beta$ is sharply traceless, it follows that $\gcd(\pi, \Phi) = 1$.
\end{proof}

Note that two polynomials $\pi, \pi'$ in $\F_q[x]$ determine the same sharply traceless element $\beta$ if and only if $(\Fr - 1)(\pi - \pi')(\Fr) \alpha = 0$, which is equivalent to saying that $\Phi$ divides $\pi - \pi'$.

\begin{corollary}
  Sharply traceless elements in $\F_{q^n}$ are in bijective correspondence with the group of units of $\F_q[x]/(\Phi(x))$.\footnote{The same argument gives that normal elements in $\F_{q^n}$ are in bijective correspondence with the group of units of $\F_q[x]/(m(x))$.}
\end{corollary}

The number of these elements can be computed as follows. Let $\Div(\Phi)$ be the set of prime factors of $\Phi$. For a given $\pi \in \Div(\Phi)$, let $\mult(\pi)$ be the multiplicity of $\pi$ in $\Phi$. Then 
\[
  \frac{\F_q[x]}{(\Phi(x))} \cong
  \prod_{\pi \in \Div(\Phi)} \frac{\F_q[x]}{(\pi(x)^{\mult(\pi)})}.
\]
The invertible elements in each factor are polynomials that are coprime to $\pi$, so their number is 
\[
  q^{\deg(\pi) \mult(\pi)} - q^{\deg(\pi) (\mult(\pi) - 1)} =
  q^{\deg(\pi) \mult(\pi)} (1 - q^{- \deg(\pi)}).
\]
Note that $n-1 = \deg(\Phi) = \sum_{\pi \in \Div(\Phi)}  \deg(\pi) \mult(\pi)$, implying the following.

\begin{corollary}
The number of sharply traceless elements in $\F_{q^n}$ is
\[
  q^{n - 1} \prod_{\pi \in \Div(\Phi)} (1 - q^{-\deg(\pi)}).
\]    
\end{corollary}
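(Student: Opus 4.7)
The plan is to deduce the closed-form count directly from the product already displayed in equation \eqref{eq:number_sharply_traceless}, which expresses the number of sharply traceless elements as $\prod_{\pi \in \Div(\Phi)} q^{(\mult(\pi)-1)\deg(\pi)}(q^{\deg(\pi)}-1)$. All the substantive work has already been done: Proposition \ref{prop:decomposition_sharply_traceless} identifies sharply traceless elements with those vectors in the decomposition \eqref{eq:decomposition} whose $U$-component vanishes and whose $X_\pi$-component is nonzero for every $\pi$, and Lemma \ref{lem:dimension_null_spaces} computes $\dim W_\pi = (\mult(\pi)-1)\deg(\pi)$ and $\dim X_\pi = \deg(\pi)$. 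Thus what remains is a purely algebraic simplification of the power of $q$ appearing in that product.

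First I would split each factor into its $q$-power part and its $(q^{\deg(\pi)}-1)$ part, pulling the former outside to obtain a single $q$-power with exponent $\sum_{\pi \in \Div(\Phi)}(\mult(\pi)-1)\deg(\pi)$. Next I would expand this as $\sum_{\pi}\mult(\pi)\deg(\pi) - \sum_{\pi}\deg(\pi)$. The first summand equals $\deg(\Phi)$, because $\Phi$ factors uniquely as $\prod_{\pi \in \Div(\Phi)}\pi^{\mult(\pi)}$ and the degree is additive under multiplication of polynomials; and $\deg(\Phi) = n-1$ directly from $\Phi(x) = x^{n-1}+x^{n-2}+\cdots+1$. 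Substituting, the exponent becomes $n-1-\sum_{\pi \in \Div(\Phi)}\deg(\pi)$, and the overall count becomes $q^{n-1-\sum_{\pi}\deg(\pi)}\prod_\pi(q^{\deg(\pi)}-1)$, matching the corollary.

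The main (and essentially only) obstacle is bookkeeping: making sure the degree identity $\deg(\Phi) = \sum_\pi \mult(\pi)\deg(\pi) = n-1$ is invoked cleanly, and that the factorisation into $W_\pi$ and $X_\pi$ contributions is not conflated. There is no new conceptual content here, since the existence and enumeration of sharply traceless elements has already been reduced to counting tuples in the decomposition \eqref{eq:decomposition}; the corollary is simply a repackaging of \eqref{eq:number_sharply_traceless} into a form where the dependence on $n$ and on the prime divisors of $\Phi$ is made transparent.
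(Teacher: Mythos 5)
Your proposal is correct and follows exactly the paper's route: it simplifies the product in \eqref{eq:number_sharply_traceless} by collecting the $q$-powers and using $\sum_{\pi}\mult(\pi)\deg(\pi)=\deg(\Phi)=n-1$. No differences from the paper's argument.
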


For example, when $n = 2$, we have $\Phi(x) = x + 1$, so we have $q - 1$ sharply traceless elements in $\F_{q^2}$, and this is precisely the same as the number of nonzero traceless elements. On the other hand, when $n = q = p$, we have $\Phi(x) = (x-1)^{p-1}$, so the number of sharply traceless elements in $\F_{p^p}$ is $p^{p-2} (p-1) = p^{p-1} - p^{p-2}$, whereas the number of nonzero traceless elements equals $p^{p-1} - 1$. In general, for a fixed $n$, the number given in the corollary is a polynomial in $q$ of degree $n-1$, implying the following.

\begin{corollary}
For any fixed $n$, a uniformly random traceless element of $\F_{q^n}$ is sharply traceless with probability tending to $1$ as $q$ tends to infinity.
\end{corollary}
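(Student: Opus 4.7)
The plan is to convert the count given in the preceding corollary into an explicit probability and then bound it uniformly from below. First, I would observe that the total number of traceless elements of $\F_{q^n}$ is exactly $q^{n-1}$, since the trace map $\F_{q^n} \to \F_q$ is a surjective $\F_q$-linear functional and hence its kernel has $\F_q$-codimension $1$. Dividing the count from the corollary by $q^{n-1}$ and distributing a factor of $q^{-\deg(\pi)}$ into each factor of the product, the probability simplifies to
\[
  \prod_{\pi \in \Div(\Phi)} \bigl(1 - q^{-\deg(\pi)}\bigr).
\]

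Next, I would note that although $\Div(\Phi)$ varies with $q$ through the factorization of $\Phi(x) = x^{n-1} + \dots + 1 \in \F_q[x]$, its cardinality satisfies the $q$-independent bound $|\Div(\Phi)| \leq \deg(\Phi) = n-1$. Since each prime factor has $\deg(\pi) \geq 1$, every factor in the product is at least $1 - 1/q$, so the probability is bounded below by $(1 - 1/q)^{n-1}$ and above by $1$. Letting $q \to \infty$ with $n$ fixed squeezes the probability to $1$.

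The only conceptual subtlety is that the index set $\Div(\Phi)$ depends on $q$, so one cannot simply pass to the limit factor-by-factor; the argument succeeds because a uniform bound on the \emph{number} of factors is all that is needed, not any stabilization of the factorization itself. No estimates beyond the elementary inequality above are required, which is why this serves as a clean corollary rather than an independent result.
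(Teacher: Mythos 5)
Your proof is correct and follows essentially the same route as the paper, which simply observes that the count from the preceding corollary is a monic polynomial of degree $n-1$ in $q$ while the number of traceless elements is $q^{n-1}$. Your explicit lower bound $(1-1/q)^{n-1}$, using $|\Div(\Phi)| \leq n-1$, is a welcome touch of extra care regarding the $q$-dependence of the factorization of $\Phi$, which the paper's one-line remark glosses over.
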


An alternative (but longer) route\footnote{We thank the referee for pointing out the simpler argument using group algebras.} to proving \Cref{thm:sharply_traceless} is by mimicking the usual approach for finding a normal element, compare with \cite{GG} or \cite{LN} and the closely related notion of a cyclic element from \cite{neumann1995cyclic}. 

\section{Even characteristic}
\label{sec:even_characteristic}

Our method for proving $2$-generation in the previous sections used consistent matrices. These do not exist when the field is of even characteristic. In this situation, we instead rely on \emph{semiconsistent} matrices. These are diagonal matrices
 \[ 
  D = \diag(\lambda_1, \dots, \lambda_n) \in \slfrak_n(\F_q) 
\]
that satisfy the condition
\[
\lambda_i + \lambda_j = \lambda_k + \lambda_l \ \textup{ if and only if } \ \{ i, j \} = \{ k, l \} \text{ or } ( i, k ) = ( j, l).\footnote{Compare with \eqref{eq:consistency_def}.}
\]
For instance, if $\alpha \in \F_{q^n}$ is a sharply traceless element, the diagonal matrix $\diag(\alpha, \Fr(\alpha), \dots, \Fr^{n-1}(\alpha))$ is semiconsistent in $\slfrak_n(\F_{q^n})$ as long as $n \neq 4$.

\begin{proposition}
Let $p = 2$ and $n \neq 2, 4$. Then $\slfrak_n(\F_q)$ is $2$-generated.
\end{proposition}
\begin{proof}
It suffices to prove that $\slfrak_n(\F_2)$ is $2$-generated. 
Let $\alpha \in \F_{2^n}$ be a sharply traceless element with minimal polynomial $f \in \F_2[x]$ of degree $n$. Let $C \in \slfrak_n(\F_2)$ be the companion matrix of $f$ with diagonal form 
\[ 
  D = P C P^{-1} = \diag\left( \alpha, \Fr(\alpha), \dots, \Fr^{n-1}(\alpha) \right) \in \slfrak_n(\F_{2^n}).
\]
Since $\alpha$ is sharply traceless, $D$ is semiconsistent. 
 
Now let $X = (x_{ij})_{i,j}$ be an arbitrary matrix in $\slfrak_n(\F_{2^n})$. We have
\[ 
  [X, D, \stackrel{k}{\dots}, D] =  \sum_{i < j} (\lambda_i + \lambda_j)^{k} \left( x_{ij} E_{ij} + x_{ji} E_{ji} \right),
\]
so arguing as in the proof of \Cref{prop:adding_with_diagonal}, the matrix $A_{ij} = x_{ij} E_{ij} + x_{ji} E_{ji}$ for $i \neq j$ is contained in the Lie algebra generated by $X$ and $D$. 

Let us consider $X = P (E_{21} + E_{13}) P^T$. Recall from \Cref{lem:coefficients} that the rows of $P$ are $v_0, v_1, \dots, v_{n-1}$, where $v_0 = (1, \alpha, \alpha^2, \dots, \alpha^{n-1})$ and $v_i = \Fr^i(v_0)$. Thus
\[
  e_k^{\transpose} P E_{ij} P^{T} e_l = 
  (e_k^{\transpose} P e_i) (e_j^{\transpose} P^{T} e_l) = 
  \Fr^{k-1}(\alpha^{i-1}) \Fr^{l-1}(\alpha^{j-1})
\]
and so
\[
  x_{kl} = \Fr^{k-1}(\alpha) \Fr^{l - 1}(1) + \Fr^{k-1}(1) \Fr^{l - 1}(\alpha^2)
  = \Fr^{k-1}(\alpha) + \Fr^{l}(\alpha).
\]
Since we are in even characteristic and $\alpha$ is sharply traceless, $x_{kl} = 0$ if and only if $l \equiv k - 1 \pmod{n}$. In particular,  $E_{n1} = x_{n1}^{-1}A_{n1}$ and $E_{i, i+1} =  x_{i, i+1}^{-1}A_{i, i+1}$ for $1 \leq i \leq n-1$ are in the Lie algebra generated by $D$ and $X.$ Since the matrices $E_{n1}$ and $E_{i,i+1}$ for $1 \leq i \leq n-1$ generate $\slfrak_n(\F_{2^n})$, the matrices $D$ and $X$ generate it as well, and so the matrices $C= P^{-1} D P$ and $P^{-1} X P = (E_{21} + E_{13})P^{\transpose} P$ also generate it. 

Note, however, that both $C$ and $(E_{21} + E_{13})P^{\transpose} P$ belong to $\slfrak_n(\F_2)$, since
\[
  e_{k}^{\transpose} P^{\transpose} P e_l =
  (P e_k)^{\transpose} (P e_l) =
  \sum_{i = 0}^{n-1} \Fr^i(\alpha^{k-1}) \Fr^i(\alpha^{l-1}) =
  \tr(\alpha^{k+l-2})
\]
for any $k,l$. Therefore, as in the proof of \Cref{cor:consistent_splitting_field_2gen}, the matrices $C$ and $(E_{21} + E_{13})P^{\transpose} P$ generate the Lie algebra $\slfrak_n(\F_2)$.
 \end{proof}

We still have to take care of the Lie algebras of rank $n=2$ and $n = 4$. Clearly $\slfrak_2(\F_2)$ is $2$-generated, for instance by $E_{12}$ and $E_{21}$. We deal with $\slfrak_4(\F)$ in \Cref{sec:exceptions}.

\section{Exceptional cases}
\label{sec:exceptions}

There are two exceptions that our methods with sharply traceless elements do not cover, namely the families $\slfrak_3(\F_q)$ with $\F_q$ of characteristic $3$ and $\mathfrak{sl}_4(\F_q)$ with $\F_q$ of characteristic $2.$ We will prove that these algebras are in fact \emph{not} $2$-generated, even over \emph{infinite} fields of corresponding characteristic. This failure of $2$-generation comes as a surprise, since this is a property enjoyed throughout simple algebraic structures over finite and infinite fields as pointed out in \Cref{sec:introduction}. We will show that this is a phenomenon that heavily relies both on the modularity of the situation and on smallness of the rank.

\subsection{\texorpdfstring{$(n,p) = (3,3)$}{(n,p) = (3,3)}}

\begin{theorem}
  \label{thm:psl3_characteristic_3}
  Let $\F$ be a field of characteristic $3$. Then any two elements of $\slfrak_3(\F)$ generate a Lie algebra of dimension at most $4$. In particular,
  $\slfrak_3(\F)$ is not $2$-generated. It is, however, $3$-generated.
\end{theorem}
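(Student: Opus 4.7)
The plan is to pinpoint a single trace identity in characteristic $3$ that confines any Lie subalgebra of $\slfrak_3(\F)$ generated by two elements to an explicit four-dimensional subspace, and then separately to exhibit three elements whose iterated brackets exhaust $\slfrak_3(\F)$.

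The identity is obtained by polarising Cayley--Hamilton. Every $X \in \Mat_3(\F)$ satisfies $X^3 - \tr(X)\, X^2 + e_2(X)\, X - \det(X)\, I = 0$ with $e_2(X) = \tfrac{1}{2}(\tr(X)^2 - \tr(X^2))$. Replacing $X$ by $X + tY$, reading off the coefficient of $t$, and specialising to $\tr(X) = \tr(Y) = 0$, I would obtain
\[
  X^2 Y + X Y X + Y X^2 + e_2(X)\, Y - \tr(XY)\, X - \tr(X^2 Y)\, I = 0,
\]
after using $\mathrm{adj}(X) = X^2 - \tr(X)X + e_2(X) I$ and tracelessness to simplify $\tr(\mathrm{adj}(X)\, Y) = \tr(X^2 Y)$. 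In characteristic $3$, $e_2(X) = \tr(X^2)$ on traceless $X$, and the collapse $-2 \equiv 1$ rewrites the Lie double bracket as $\ad_X^2 Y = X^2 Y - 2 X Y X + Y X^2 = X^2 Y + X Y X + Y X^2$, turning the polarised Cayley--Hamilton identity into the curious trace identity
\[
  [X, [X, Y]] = \tr(XY)\, X - \tr(X^2)\, Y + \tr(X^2 Y)\, I.
\]

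This identity puts $[X, [X, Y]]$, and by swapping $X \leftrightarrow Y$ also $[Y, [X, Y]]$, inside $\mathrm{span}(X, Y, I)$. Since $I \in \slfrak_3(\F)$ is central in characteristic $3$, the subspace $M = \mathrm{span}(X, Y, [X, Y], I)$ contains $X, Y$ and is stable under bracketing: the brackets $[X, [X, Y]]$ and $[Y, [X, Y]]$ land in $M$ by the identity, and all brackets with $I$ vanish. Hence $\langle X, Y \rangle \subseteq M$, giving $\dim \langle X, Y \rangle \leq 4 < 8 = \dim \slfrak_3(\F)$. For $3$-generation I take $E_{12}, E_{23}, E_{31}$: pairwise brackets produce $E_{13}, E_{21}, E_{32}$, and the further brackets $[E_{12}, E_{21}] = E_{11} - E_{22}$ and $[E_{23}, E_{32}] = E_{22} - E_{33}$ complete a basis of the eight-dimensional $\slfrak_3(\F)$.

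The main obstacle is the polarisation bookkeeping, specifically noticing that in characteristic $3$ the relation $-2 \equiv 1$ simultaneously forces $\ad_X^2 Y$ to equal the fully symmetric product $X^2 Y + X Y X + Y X^2$ and converts $e_2(X)$ into $\tr(X^2)$; these two coincidences are exactly what lets the adjoint identity be read off from Cayley--Hamilton. Once the trace identity is in hand, the closure of $M$ under bracketing and the verification that $E_{12}, E_{23}, E_{31}$ generate $\slfrak_3(\F)$ are short computations.
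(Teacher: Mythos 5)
Your proof is correct and follows essentially the same route as the paper: both hinge on polarising the characteristic-$3$ Cayley--Hamilton identity for traceless $3\times 3$ matrices to obtain the trace identity $[X,[X,Y]] = \tr(XY)\,X - \tr(X^2)\,Y + \tr(X^2Y)\,I$ (the paper states its version modulo $I$ in $\pslfrak_3(\F)$, which is the same identity), and then conclude by the same span-closure argument plus an explicit triple of generators. The only difference is cosmetic: you polarise the matrix identity directly and use the coincidence $\ad_X^2 Y = X^2Y + XYX + YX^2$ in characteristic $3$, whereas the paper first passes to $\pslfrak_3(\F)$, reduces Cayley--Hamilton to $x^3 + \tr(x^2)x = 0$, and multilinearises via Jacobson's formula over $\F[T]/(T^2)$.
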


Note that our result diverges from \cite[Theorem 1]{chistopolskaya2018nilpotent}, whose argument appears to hinge on the existence of a consistent matrix, but these do not exist in $\slfrak_3(\F)$ when $\F$ is of characteristic $3$, see \Cref{lem:sharply_traceless_set_is_consistent}.


The claim that $\slfrak_3(\F)$ is $3$-generated follows immediately from the fact that over the prime field $\F_3$ of $\F$, the matrices
\begin{equation}
\label{eq:3_generators}
A = \begin{pmatrix} 1 & \cdot & \cdot \\ \cdot & -1 & \cdot \\ \cdot & \cdot & \cdot \end{pmatrix}, \
B = \begin{pmatrix} \cdot & \cdot & 1 \\\cdot & \cdot & 1 \\ \cdot & \cdot & \cdot \end{pmatrix}, \
B^{\transpose} = \begin{pmatrix} \cdot & \cdot & \cdot \\ \cdot & \cdot & \cdot \\ 1 & 1 & \cdot \end{pmatrix}
\end{equation}
generate $\slfrak_3(\F_3)$, and so they also generate $\slfrak_3(\F)$ over $\F$. Indeed, $[A,B] - B = E_{23}$ and $[A,B^{\transpose}] - B^{\transpose} = E_{31}$. Thus the Lie algebra generated by $A,B,B^{\transpose}$ contains $E_{13} = B - E_{23}$, $E_{32} = B^{\transpose} - E_{31}$, and therefore $[E_{23}, E_{31}] = E_{21}$ and $[E_{13}, E_{32}] = E_{12}$.


We shall derive \Cref{thm:psl3_characteristic_3} from the following curious identity in the quotient $\pslfrak_3(\F)$ of the Lie algebra $\slfrak_3(\F)$ by the central ideal consisting of scalar multiples of the identity.
\begin{proposition}
  \label{prop:trace_formula}
For any $x, y \in \slfrak_3(\F)$, the following identity holds in $\pslfrak_3(\F)$:
  \[
  [x,y,y] = - \tr(y^2) x + \tr(xy) y.
  \]
\end{proposition}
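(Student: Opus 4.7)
The plan is to deduce the identity from the Cayley--Hamilton theorem by a polarization argument. For any traceless $A \in \slfrak_3(\F)$, the coefficients of the characteristic polynomial $\lambda^3 - e_1(A)\lambda^2 + e_2(A)\lambda - e_3(A)$ satisfy $e_1(A) = \tr(A) = 0$ and $e_2(A) = -\tr(A^2)/2$. Since $\characteristic(\F) = 3$ forces $-1/2 = 1$ in $\F$, we have $e_2(A) = \tr(A^2)$, so Cayley--Hamilton yields the matrix identity
\[
A^3 + \tr(A^2)\, A - \det(A)\, I = 0
\quad \text{for every } A \in \slfrak_3(\F).
\]

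I would then polarize this identity by substituting $A = y + tx$ for a formal indeterminate $t$, with both $x, y$ traceless, and extracting the coefficient of $t$. Routine expansion gives $A^3 = y^3 + t(xy^2 + yxy + y^2 x) + O(t^2)$ and $\tr(A^2) = \tr(y^2) + 2t\,\tr(xy) + O(t^2)$, so the coefficient of $t$ in $A^3 + \tr(A^2)\, A$ is
\[
(xy^2 + yxy + y^2 x) + \tr(y^2)\, x + 2\tr(xy)\, y.
\]
In characteristic $3$ one has $xy^2 + yxy + y^2 x = xy^2 - 2yxy + y^2 x = [x,y,y]$ and $2 \equiv -1$, so the above coefficient is precisely $[x,y,y] + \tr(y^2)\, x - \tr(xy)\, y$. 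The coefficient of $t$ in $\det(A)\, I$ is some scalar matrix $c(x,y)\, I$, whose explicit form is not needed.

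Collecting coefficients gives the matrix identity
\[
[x,y,y] + \tr(y^2)\, x - \tr(xy)\, y = c(x,y)\, I
\]
in $\slfrak_3(\F)$; note that $I$ itself lies in $\slfrak_3(\F)$ because $\characteristic(\F)=3$. Passing to the quotient $\pslfrak_3(\F) = \slfrak_3(\F)/\F I$ kills the right-hand side and yields the claimed identity. The only real subtlety is the sign/factor bookkeeping in characteristic $3$ when polarizing Cayley--Hamilton; the fact that the right-hand side is a \emph{scalar} matrix rather than zero is exactly why the identity naturally descends to $\pslfrak_3(\F)$ and not to $\slfrak_3(\F)$, and this is what will be leveraged to obstruct $2$-generation.
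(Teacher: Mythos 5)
Your proof is correct and follows essentially the same route as the paper: both reduce the claim to the Cayley--Hamilton relation $A^3 + \tr(A^2)\,A - \det(A)\,I = 0$ for traceless $A$ (using Newton's identities and $\characteristic(\F)=3$ to identify $e_2(A)$ with $\tr(A^2)$) and then polarize at first order, with the $\det$ term contributing only a scalar matrix that dies in $\pslfrak_3(\F)$. The only cosmetic difference is that the paper computes $(Tx+y)^3$ via Jacobson's formula in $\pslfrak_3(\F)\otimes_{\F}\F[T]/(T^2)$, whereas you expand $xy^2+yxy+y^2x=[x,y,y]$ directly in the associative algebra; both are valid.
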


This identity implies that for any $x, y \in \pslfrak_3(\F)$, the vector space spanned by $x, y, [x, y]$ is closed with respect to Lie bracketing with $x$ and $y$. In other words, the Lie algebra generated by $x$ and $y$ is of dimension at most $3$. Thus $\pslfrak_3(\F)$ is not $2$-generated. Furthermore, it follows from the same identity that any two elements in $\slfrak_3(\F)$ generate a Lie algebra of dimension at most $4$. Note that this bound is sharp, for example the Lie algebra generated by the matrices $A$ and $B$ from \eqref{eq:3_generators} is of dimension $4$.

\begin{proof}[Proof of \Cref{prop:trace_formula}]
Let $x \in \slfrak_3(\F)$. The characteristic polynomial of $x$ is $t^3 + \alpha(x) t - \det(x)$, where $\alpha(x)$ is a form in $x$, computable as the elementary symmetric polynomial $e_2$ in the eigenvalues of $x$. By Newton's identities, $e_2$ can be expressed as $(p_1^2 - p_2)/2$, where $p_i$ are power sums. As $\tr(x) = 0$ and $\F$ is of characteristic $3$, it follows that $\alpha(x) = p_2(x) = \tr(x^2)$. Therefore we have
\begin{equation}
  \label{eq:char_poly_33}
  x^3 + \tr(x^2) x = 0
\end{equation}
in $\pslfrak_3(\F)$. This identity holds over any field of characteristic $3$, as well as any subquotient of such a field. We can thus multilinearize this identity by using it in the Lie algebra $\pslfrak_3(\F) \otimes_{\F} \F[T]/(T^2)$ with the element  $Tx + y$ for any $x,y \in \slfrak_3(\F)$.\footnote{We thank Alexander Premet who showed us this trick. It substantially shortens our previous argument.} Note that, by Jacobson's formula \cite{jacobson1979lie},
\[
  (Tx + y)^3 = 
  T^3 x^3 + y^3 + [Tx,y,y] - [Tx,y,Tx] = 
  y^3 + T [x,y,y]
\]
since $T^2 = 0$, and similarly
\[
  \tr((Tx + y)^2) = - T \tr(xy) + \tr(y^2).
\]
By inserting $Tx + y$ into \eqref{eq:char_poly_33}, we thus obtain $[x,y,y] - \tr(xy) y + \tr(y^2) x = 0$.
\end{proof}

In fact, when the \emph{$u$-invariant}\footnote{The largest integer $u(\F) \geq 0$ such that there exists an anisotropic quadratic form of dimension $u(\F)$ over $\F$, or is infinity if no such largest integer exists.} of $\F$  is at most $2$, the identity in \Cref{prop:trace_formula} further implies that for generic choices of $x$ and $y$, the Lie algebra they generate is isomorphic to $\slfrak_2(\F)$. This occurs, for example, when $\F$ is finite or algebraically closed (see \cite[Example 36.2]{EKM}).

\begin{corollary}
\label{cor:generic_sl2_in_sl3}
Let $\F$ be a field of characteristic $3$ such that $u(\F) \leq 2$. Let $x, y \in \slfrak_3(\F)$. Let $G \in \Mat_3(\F)$ be the Gram matrix of $x, y, [x,y]$ under the trace form. Then the Lie algebra generated by the images of $x$ and $y$ in $\pslfrak_3(\F)$ is isomorphic to $\slfrak_2(\F)$ if and only if $\det G \neq 0$.\footnote{It is not difficult to see that $\det G = (\tr(x^2)\tr(y^2)-(\tr(xy))^2)(\tr(x^2y^2)-\tr((xy)^2))$.}
\end{corollary}
\begin{proof}
Suppose first that $G$ is nondegenerate. Hence $x,y,[x,y]$ are linearly independent. The restriction of the trace form to the span of $x,y,[x,y]$ is thus a nondegenerate quadratic form of dimension $3$. This form is isotropic, so by the Witt decomposition theorem,  it contains a hyperbolic plane spanned by some $e,f$. Thus $\tr(e^2) = \tr(f^2) = 0$ and $\tr(ef) = 1$. Let $h = [e,f]$. Using \Cref{prop:trace_formula}, we then compute $[h,e] = 2e$ and $[h,f] = -2f$ in $\pslfrak_3(\F)$. These formulas further imply that $e,f,h$ are linearly independent. After passing to $\pslfrak_3(\F)$, the Lie algebra generated by $x$ and $y$ is thus isomorphic to $\slfrak_2(\F)$.

Assume now that $G$ is degenerate.
We can assume that the Lie algebra generated by the images of $x,y$ in $\pslfrak_3(\F)$ is of dimension $3$. Hence $x,y,[x,y]$ are linearly independent and the restriction of the trace form to their span $L = \langle x, y, [x,y] \rangle$ is degenerate. Thus there is a nonzero $z \in L$ with $\tr(zx) = \tr(zy) = \tr(z[x,y]) = 0$. Using \Cref{prop:trace_formula}, we conclude that $\ad_z$ is nilpotent of degree $2$ on $\pslfrak_3(\F)$. The Lie algebra $\slfrak_2(\F)$, however, does not contain nonzero nilpotent elements of degree $2$ when $\F$ is of odd characteristic. This completes the proof.
\end{proof}

 In particular, when $\F$ is finite the probability that a uniformly random tuple $x,y \in \slfrak_3(\F)$ belongs to the variety $\det G = 0$ is at most $8/\abs{\F}$ by the Schwartz-Zippel lemma, which tends to $0$ as $\abs{\F}$ tends to infinity.

\subsection{\texorpdfstring{$(n,p) = (4,2)$}{(n,p) = (4,2)}}

This case is handled using the same method.

\begin{theorem}
  \label{thm:psl4_characteristic_2}
  Let $\F$ be a field of characteristic $2$. Then any two elements of $\slfrak_4(\F)$ generate a Lie algebra of dimension at most $9$. In particular,
  $\slfrak_4(\F)$ is not $2$-generated. It is, however, $3$-generated.
\end{theorem}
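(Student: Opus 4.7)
The proof follows the pattern of \Cref{thm:psl3_characteristic_3}: establish a Cayley--Hamilton identity in $\pslfrak_4(\F)$, polarize it in the spirit of \Cref{prop:trace_formula}, and use the resulting trace identities to force every Lie subalgebra of $\slfrak_4(\F)$ generated by two elements into a $9$-dimensional subspace.

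For $x \in \slfrak_4(\F)$, the characteristic polynomial takes the form $t^4 + e_2(x)t^2 + e_3(x)t + e_4(x)$. In characteristic~$2$ Newton's identities collapse to give $\tr(x^2) = \tr(x)^2 = 0$ and $e_3(x) = \tr(x^3)$, while $e_2$ persists as an independent quadratic form on $\slfrak_4(\F)$. Modding out by $\F \cdot I$ (which lies in $\slfrak_4(\F)$ since $4 = 0$), Cayley--Hamilton becomes $x^4 + e_2(x) x^2 + \tr(x^3) x \equiv 0$ in $\pslfrak_4(\F)$. Substituting $x \mapsto Tx + y$ with $T^2 = 0$, using the characteristic-$2$ expansion $(Tx + y)^4 = y^4 + T\,[x,y,y,y]$ together with the polar-form computation $e_2(x+y) - e_2(x) - e_2(y) = \tr(x)\tr(y) - \tr(xy)$ (which simplifies to $\tr(xy)$ on traceless matrices), the $T$-coefficient yields the main identity
\[
  [x,y,y,y] + e_2(y)[x,y] + \tr(xy)\, y^2 + \tr(xy^2)\, y + \tr(y^3)\, x \equiv 0 \pmod{\F \cdot I}. \quad (\ast)
\]
A swap $x \leftrightarrow y$ produces a companion identity $(\ast')$ for $[x,y,x,x]$, and the $T^2$-coefficient of the substituted Cayley--Hamilton yields a third identity relating $[x,y,x,y]$ modulo $I$ to $[x,y]^2$, $x^2$, $y^2$, and Lie monomials of length at most two.

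Combining $(\ast)$, $(\ast')$, and the $T^2$-coefficient identity with the characteristic-$2$ coincidence $\ad_{A^2} = \ad_A^2$ applied for $A \in \{x, y, [x,y]\}$ (giving for instance $[y, x^2] = [x,y,x]$ and $[y, [x,y]^2] = [[x,y], [x,y,y]]$), one shows that every Lie monomial of length at least~$5$ in $x, y$ reduces modulo $\F \cdot I$ into the span of Lie monomials of length at most~$4$. The Lie algebra $\langle x, y\rangle$ is therefore contained in the $9$-dimensional subspace
\[
  V = \operatorname{span}_\F\{x,\; y,\; [x,y],\; [x,y,x],\; [x,y,y],\; [x,y,x,x],\; [x,y,x,y],\; [x,y,y,y],\; I\}
\]
of $\slfrak_4(\F)$, whence $\dim \langle x, y \rangle \leq 9 < 15 = \dim \slfrak_4(\F)$, and $\slfrak_4(\F)$ is not $2$-generated. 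For $3$-generation, exhibit an explicit triple of matrices in $\slfrak_4(\F_2)$ in the style of \eqref{eq:3_generators}, whose iterated Lie brackets span $\slfrak_4(\F_2)$ by direct computation, and which hence generates $\slfrak_4(\F)$ over any field of characteristic~$2$ by descent.

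The main obstacle is the reduction of the two length-$5$ monomials $[x,y,x,y,x]$ and $[x,y,x,y,y]$ (equivalently, via Jacobi and characteristic~$2$, $[x,y,y,x,x]$ and $[x,y,y,x,y]$). Bracketing $(\ast)$, $(\ast')$, or the $T^2$-coefficient identity with $x$ or $y$ reduces the four ``clean'' length-$5$ brackets $[x,y,x,x,x]$, $[x,y,x,x,y]$, $[x,y,y,y,x]$, $[x,y,y,y,y]$, but for the ``mixed'' brackets it produces only redundant relations, since the $[x,y]^2$-term in the $T^2$-coefficient identity, when bracketed, re-introduces the same length-$5$ monomials through $\ad_{[x,y]^2} = \ad_{[x,y]}^2$. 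The missing reductions must therefore be extracted by combining several of the trace identities, or by introducing a further polarization of Cayley--Hamilton (e.g.\ the $TS$-coefficient of the bivariate substitution $x \mapsto Tx + Sy + z$ with $T^2 = S^2 = 0$). Tracking these cancellations, which are delicate because many apparent Jacobi reductions collapse in characteristic~$2$ (as already signalled by $[x,y,x,y] = [x,y,y,x]$), and so completing the closure of $V$ under $\ad_x$ and $\ad_y$, is the technical heart of the argument.
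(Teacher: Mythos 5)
Your route to the key identity is the same as the paper's: reduce Cayley--Hamilton modulo $\F\cdot I$, polarize with $Tx+y$ over $\F[T]/(T^2)$, and read off the $T$-coefficient. Your explicit forms $e_2(y)$, $\tr(xy)$, $\tr(xy^2)$, $\tr(y^3)$ in $(\ast)$ are a correct instance of \Cref{prop:42_formula}, and your treatment of $3$-generation matches the paper's explicit triple \eqref{eq:42_ABBT}. The genuine gap is the step you yourself defer as the ``technical heart'': with your choice of spanning set the closure argument does not go through. Your $V$ is spanned by Lie monomials in $x,y$ (plus $I$), and the obstruction is exactly $\ad_x([x,y,x,y])$ and $\ad_y([x,y,x,y])$. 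Since $[x,y,x,y]=[x^2,y^2]$ by restrictedness, Jacobi gives $[x,[x^2,y^2]]=[x^2,[x,y^2]]=\ad_x^3(y^2)$, and applying the identity $(\ast')$ --- which is \emph{linear} in its first argument and therefore valid with that argument equal to any traceless matrix, in particular $y^2$ --- yields
\[
\ad_x^3(y^2)\equiv e_2(x)\,[y^2,x]+\tr(xy^2)\,x^2+\tr(x^2y^2)\,x+\tr(x^3)\,y^2 \pmod{\F\cdot I}.
\]
The terms $\tr(xy^2)\,x^2$ and $\tr(x^3)\,y^2$ do not generically lie in the span of Lie monomials in $x$ and $y$, so your $V$ is not $\ad$-stable, and bracketing $(\ast)$ and $(\ast')$ with $x$ and $y$ will never produce $x^2$ or $y^2$ individually. (Also note there is no ``$T^2$-coefficient'' over $\F[T]/(T^2)$; extracting one would require working modulo $T^3$ or the bivariate substitution you mention, and even then it only delivers $[x,y]^2+[x^2,y^2]$ rather than the missing reductions.)

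The paper's fix is a different, and crucial, choice of spanning set: it does not insist that the $9$-dimensional space consist of elements of the Lie algebra generated by $x$ and $y$, only that it \emph{contain} that Lie algebra and be $\ad_x$- and $\ad_y$-stable. Concretely it takes $x,\ y,\ [x,y],\ [x,y^2],\ [y,x^2],\ x^2,\ y^2,\ [x^2,y^2]$ (together with $I$ when working in $\slfrak_4$ rather than $\pslfrak_4$). By \Cref{prop:42_formula} this space contains your $[x,y,y,y]$ and $[x,y,x,x]$, and the displayed formula above (together with its $x\leftrightarrow y$ mirror) shows it is closed under bracketing with $x$ and $y$, precisely because $x^2$ and $y^2$ are now present to absorb the troublesome terms. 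Incorporating $x^2$, $y^2$ and $[x^2,y^2]$ into the spanning set, and exploiting the linearity of the polarized Cayley--Hamilton identity in its first slot so that it can be evaluated at $y^2$ and $x^2$, is the missing idea needed to complete your argument.
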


It is not difficult to find $3$ matrices in $\slfrak_4(\F)$ that generate the whole Lie algebra. One can take, for example,
\begin{equation}
  \label{eq:42_ABBT}
A = \begin{pmatrix} 
  1 & \cdot & \cdot & \cdot \\
  \cdot & 1 & \cdot & \cdot \\
  \cdot & \cdot & \cdot & \cdot \\
  \cdot & \cdot & \cdot & \cdot \end{pmatrix},\ 
B = \begin{pmatrix}
  \cdot & \cdot & 1 & 1 \\
  1 & \cdot & \cdot & \cdot \\
  \cdot & \cdot & \cdot & \cdot \\
  1 & 1 & \cdot & \cdot \end{pmatrix},\
B^{\transpose} = \begin{pmatrix}
  \cdot & 1 & \cdot & 1 \\
  \cdot & \cdot & \cdot & 1 \\
  1 & \cdot & \cdot & \cdot \\
  1 & \cdot & \cdot & \cdot \end{pmatrix}.
\end{equation}
Indeed, the Lie algebra $L$ generated by these elements contains $[A,B] + B = E_{21}$ and thus also $[E_{21}, B^{\transpose}] + A = E_{24} \in L$. Symmetrically, we obtain $E_{12}, E_{42} \in L$. Therefore $[E_{42}, E_{21}] = E_{41} \in L$ and symmetrically $E_{14} \in L$. It follows that $[E_{41}, E_{14} + B] = E_{43} \in L$ and thus $[E_{24}, E_{43}] = E_{23} \in L$. Symmetrically we obtain $E_{32} \in L$ and hence $[E_{32}, E_{21}] = E_{31} \in L$. This means that $L$ contains all nondiagonal elementary matrices, therefore $L = \slfrak_4(\F)$.

The difficult part of the proof of \Cref{thm:psl4_characteristic_2} again follows from an incidental identity that relies on just the right smallness of the rank and even characteristic.

\begin{proposition}
  \label{prop:42_formula}
There are forms $a, b, c, d$ on $\slfrak_4(\F)$ so that for any $x, y \in \slfrak_4(\F)$, the following identity holds in $\pslfrak_4(\F)$:
  \[
  [x,y,y,y] = a(x,y) x + b(x,y) y + c(x,y) [x,y] + d(x,y) y^2.
  \]
\end{proposition}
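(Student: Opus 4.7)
The plan is to parallel the proof of \Cref{prop:trace_formula}, replacing the rank-$3$ Cayley--Hamilton identity by its rank-$4$ analogue. For traceless $y \in \slfrak_4(\F)$ in characteristic $2$, Newton's identities give $\tr(y^2) = \tr(y)^2 = 0$, so the characteristic polynomial of $y$ reduces to $t^4 + e_2(y)\, t^2 + \tr(y^3)\, t + \det(y)$, where $e_2$ is the second elementary symmetric function of the eigenvalues, a well-defined quadratic polynomial form on $\slfrak_4$ (representable as $\tr(\Lambda^2 y)$). Projecting Cayley--Hamilton to $\pslfrak_4(\F)$ kills the scalar term $\det(y)\, I$, yielding the working identity
\[
  y^4 + e_2(y)\, y^2 + \tr(y^3)\, y = 0 \quad \text{in } \pslfrak_4(\F).
\]

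I would then polarize this identity by working in $\pslfrak_4(\F) \otimes_{\F} \F[T]/(T^2)$, substituting $Tx + y$ for $y$ and extracting the coefficient of $T$. Powers of $Tx + y$ simplify cleanly using the characteristic-$2$ Jacobson formula $(a+b)^2 = a^2 + b^2 + [a,b]$: since $T^2 = 0$, one finds $(Tx+y)^2 = y^2 + T[x,y]$ and iterating gives $(Tx+y)^4 = y^4 + T[y^2,[x,y]]$. The key observation, specific to characteristic $2$, is that $[x,y,y] = xy^2 - 2yxy + y^2 x$ collapses to $xy^2 + y^2 x$, so a direct expansion yields the identity $[y^2,[x,y]] = [x,y,y,y]$. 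In parallel, $(Tx+y)^3 = y^3 + T(xy^2 + y[x,y])$, and cyclicity of trace gives $\tr((Tx+y)^3) = \tr(y^3) + T\tr(xy^2)$ since $\tr(y[x,y]) = 0$. Finally, as a quadratic form $e_2$ admits a symmetric bilinear polarization $B$ with $e_2(u+v) = e_2(u) + e_2(v) + B(u,v)$, whence $e_2(Tx+y) = e_2(y) + T\, B(x,y)$.

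Substituting everything into the polarized Cayley--Hamilton identity and collecting the coefficient of $T$ (the $T^0$ part merely reproduces the identity for $y$ alone), the $T^1$ part reads
\[
  [x,y,y,y] = \tr(y^3)\, x + \tr(xy^2)\, y + e_2(y)\, [x,y] + B(x,y)\, y^2,
\]
proving the claim with $a(x,y) = \tr(y^3)$, $b(x,y) = \tr(xy^2)$, $c(x,y) = e_2(y)$, and $d(x,y) = B(x,y)$. It remains to check that each of these forms, together with the element $y^2$ on the right-hand side, descends to $\pslfrak_4(\F)$; this is a routine verification using $\characteristic \F = 2$ and tracelessness (for instance $(y + \lambda I)^2 = y^2 + \lambda^2 I$ lies in the scalar ideal, and the coefficient forms are each unchanged under $y \mapsto y + \lambda I$). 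I expect the only delicate step to be the identification $[y^2,[x,y]] = [x,y,y,y]$: it is precisely this accident of characteristic $2$ together with the low rank that turns an otherwise purely associative polarization of Cayley--Hamilton into a genuine Lie-theoretic commutator identity, mirroring the mechanism behind \Cref{prop:trace_formula}.
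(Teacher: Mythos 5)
Your proof is correct and follows essentially the same route as the paper: project the Cayley--Hamilton identity to $\pslfrak_4(\F)$, polarize with $Tx+y$ over $\F[T]/(T^2)$, and use Jacobson's formula for $(Tx+y)^2$ and $(Tx+y)^4$. The only (harmless) differences are that you identify the forms explicitly as $\tr(y^3)$, $\tr(xy^2)$, $e_2(y)$ and the polarization of $e_2$ via Newton's identities, where the paper leaves them as abstract coefficients of $\det(tI - Tx - y)$, and that you verify $[x,y,y^2]=[x,y,y,y]$ by direct expansion rather than by invoking the restricted structure.
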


For any $x, y \in \pslfrak_4(\F)$, the Lie algebra generated by $x$ and $y$ contains the following elements:
\begin{equation}
  \label{eq:42_8elements}
  x, y, [x,y], [x,y,y], [y,x,x], [x,y,y,y], [y,x,x,x], [x,y,x,y].
\end{equation}
Since the Lie algebra $\pslfrak_4(\F)$ is $2$-restricted, we have $\ad_{z^2} = (\ad_z)^2$ for any $z$. This implies that $[x,y,y,y] = [x, y, y^2]$ and $[x,y,x,y] = [y,x,x,y] = [y, x^2, y] = [x^2, y, y] = [x^2, y^2]$. It then follows from \Cref{prop:42_formula} that the elements \eqref{eq:42_8elements} all belong to the vector space spanned by
\[
  x, y, [x,y], [x,y^2], [y, x^2], x^2, y^2, [x^2, y^2],
\]
and at the same time this vector space is closed with respect to Lie bracketing with $x$ and $y$. In other words, the Lie algebra generated by $x$ and $y$ is of dimension at most $8$. Thus $\pslfrak_4(\F)$ is not $2$-generated. Furthermore, it follows from the same identity that any two elements in $\slfrak_4(\F)$ generate a Lie algebra of dimension at most $9$. Note that this bound is sharp, for example the Lie algebra generated by the matrices $B$ and $B^{\transpose}$ from \eqref{eq:42_ABBT} is of dimension $9$.

\begin{proof}[Proof of \Cref{prop:42_formula}]
  Let $x \in \slfrak_4(\F)$. The characteristic polynomial of $x$ is $t^4 + \alpha(x) t^2 - \beta(x) t + \det(x)$, where $\alpha, \beta$ are forms in $x$ with coefficients in $\F_2$.\footnote{These forms are computable as elementary symmetric polynomials $e_2, e_3$ in the eigenvalues of $x$. As we are in characteristic $2$, these polynomials are not expressible in terms of power sums, so the formula is not as clear as the one in the previous section.} Therefore we have
  \begin{equation}
    \label{eq:42_char_poly}
    x^4 + \alpha(x) x^2 - \beta(x) x = 0
  \end{equation}
  in $\pslfrak_4(\F)$. As in the proof of \Cref{prop:trace_formula}, we multilinearize this identity by using it with the element $Tx + y$ with $T^2 = 0$. By Jacobson's formula, we have
  \[
    (Tx + y)^2 = y^2 + T [x,y] 
    \quad \text{and so} \quad
    (Tx + y)^4 = y^4 + T [x,y,y^2].
  \]
  Note that $\alpha(Tx + y), \beta(Tx + y)$ are polynomials in $T$, since they are expressible as coefficients of the polynomial $\det(tI - Tx - y)$ in $t$. Write $\alpha(Tx + y) = c + Td$ in $\F[T]/(T^2)$ and similarly $\beta(Tx + y) = a + Tb$, where $a,b,c,d$ are forms on $\slfrak_4(\F)$ evaluated at $x,y$. By inserting $Tx+y$ into \eqref{eq:42_char_poly}, we thus obtain
  \[
    [x,y,y^2] + d y^2 + c [x,y] + a x + b y = 0,
  \]
  as required.
\end{proof}

A similar phenomenon to \Cref{cor:generic_sl2_in_sl3} appears to hold in this case as well, although we have not been able to prove it. Generically, the Lie algebra generated by two elements in $\pslfrak_4(\F)$ seems to be isomorphic to $\slfrak_3(\F)$.

\bibliographystyle{alpha}
\bibliography{biblio}

\end{document}